\theoremstyle{plain}
\newtheorem{prop}{Proposition}[section]
\newtheorem{thm}[prop]{Theorem}
\newtheorem{lemma}[prop]{Lemma}
\newtheorem*{thmA}{Theorem~\ref{thm:d=6}}
\newtheorem*{thmB}{Theorem~\ref{thm:cased=129}}
\theoremstyle{definition}
\theoremstyle{remark}
\newtheorem{remark}{Remark}
\numberwithin{table}{section}
\DeclareMathOperator{\chic}{\chi_{\text{cyc}}}
\DeclareMathOperator{\lcm}{lcm}
\DeclareMathOperator{\Id}{Id}
\DeclareMathOperator{\Cl}{Cl}
\DeclareMathOperator{\Gal}{Gal}
\newcommand{\D}{\mathcal D}
\newcommand{\bfP}{\mathcal P}
\newcommand{\F}{\mathbb F}
\newcommand{\E}{E_{(a,b,c)}}
\newcommand{\Om}{{\mathscr{O}}}
\newcommand{\Disc}{\Delta}
\newcommand{\GL}{{\rm GL}}
\newcommand{\PSL}{{\rm PSL}}
\def\ZZ{\mathbb Z}
\def\RR{\mathbb R}
\def\QQ{\mathbb Q}
\def\II{\mathbb I}
\def\<#1>{{\left\langle{#1}\right\rangle}}
\def\Z{{\mathbb Z}}             
\def\Q{{\mathbb Q}}             
\def\id#1{{\mathfrak{#1}}}      
\def\normid#1{{\norm{\id{#1}}}}
\DeclareMathOperator{\norm}{{\mathscr N}}
\let\kro\dkro
\begin{document}
	
\title{$\Q$-curves, Hecke characters and some Diophantine equations II.}
	
\author{Ariel Pacetti} \address{Center for Research and Development in
  Mathematics and Applications (CIDMA), Department of Mathematics,
  University of Aveiro, 3810-193 Aveiro, Portugal}
\email{apacetti@ua.pt}
	
\author{Lucas Villagra Torcomian} \address{FAMAF-CIEM, Universidad
  Nacional de C\'ordoba. C.P:5000, C\'ordoba, Argentina.}
\email{lucas.villagra@unc.edu.ar}

\keywords{$\Q$-curves, Diophantine equations}
\subjclass[2010]{11D41,11F80}

\begin{abstract}
  In the article \cite{PT} a general procedure to study solutions of
  the equations $x^4-dy^2=z^p$ was presented for negative values of
  $d$. The purpose of the present article is to extend our previous
  results to positive values of $d$. On doing so, we give a
  description of the extension
  $\Q(\sqrt{d},\sqrt{\epsilon})/\Q(\sqrt{d})$ (where $\epsilon$ is a
  fundamental unit) needed to prove the existence of a Hecke character
  over $\Q(\sqrt{d})$ with prescribed local conditions. We also extend some
  ``large image'' results due to Ellenberg regarding images of Galois representations
  coming from $\Q$-curves  from
  imaginary to real quadratic fields.
\end{abstract}
	
\maketitle
	
\section*{Introduction}
The study of solutions of Diophantine equations has been a very active
research field since Wiles' proof of Fermat's Last Theorem. There are
still many open conjectures on understanding solutions of a
generalized equation
\begin{equation}
  Ax^p +By^q = Cz^r,
\label{eq:generalizedfermat}
\end{equation}
for $\frac{1}{p} + \frac{1}{q} + \frac{1}{r} <1$. As was already
observed in \cite{MR1348707}, if no condition on the solutions is
imposed then the equation might have infinitely many solutions. To
overcome this subtlety we restrict to what in the literature is called
\emph{primitive solutions}. A solution $(a,b,c)$ to
(\ref{eq:generalizedfermat}) is called \emph{primitive} if the numbers
$\{aA,bB,cC\}$ are pairwise coprime.

A particular interesting example of~(\ref{eq:generalizedfermat})
occurs for exponents $(p,q,r) = (4,2,r)$ and $(A,B,C) = (1,1,1)$,
studied by Darmon and Ellenberg independently (see
\cite{MR2075481}). The Frey curve attached to a solution of it happens
to be a $\Q$-curve (i.e. an elliptic curve defined over a number
field, which is isogenous to all its Galois conjugates). $\Q$-curves
have the special property that a twist of their Galois representation
extends to a Galois representation of the whole Galois group
$\Gal(\overline{\Q}/\Q)$ and by \cite[Theorem 4.4]{MR2058653} and
Serre's modularity conjecture (\cite{MR885783}, \cite{MR2551764} and \cite{MR2827796})
it equals the Galois representation of a classical modular form. Then,
one can follow the modular method to compute (via a lowering the level
argument) a fixed space of level $N$ and weight two modular forms
(with a Nebentypus $\varepsilon$) and try to discard the ones that
cannot match a possible solution (due to a so called ``local''
obstruction). Using this method, in \cite{PT} the equation
\begin{equation}
  \label{eq:24p}
  x^4-dy^2=z^p
\end{equation}
was studied for different negative values of $d$. The novelty was to
use the theory of Hecke characters over imaginary quadratic fields to
give a precise formula for the value of $N$ and the character
$\varepsilon$. A natural question is the following: what happens if we
take positive values of $d$?

To a primitive solution $(a,b,c)$ of (\ref{eq:24p}) (or equivalently a
solution $(a,b,c)$ satisfying that the values $\{a,b,c\}$ are pairwise
coprime), one associates (as explained in \cite{MR2561200}) the
elliptic curve
\[
  \E: y^2=x^3+4ax^2+2(a^2+\sqrt{d}b)x, 
\]
defined over the field $K = \Q(\sqrt{d})$. When $d$ is positive
(and not a square) $K$ is a real quadratic field. It is known that all
elliptic curves over real quadratic fields are modular
(see \cite{MR3359051}) hence one can follow the modular approach working
with Hilbert modular forms. It turns out that such approach becomes
impractical very soon, due to the huge dimension of the corresponding
spaces (see Table \ref{table:examplesd}). However, the $\Q$-curves approach is still practical in many
circumstances, which motivates the present article. This article
should be thought as a continuation of our previous work \cite{PT}, where we
settle the following problems:
\begin{itemize}
\item Prove the existence of Hecke characters over real quadratic
  fields with prescribed local behavior.
		
\item Give a precise recipe for the level $N$ and the Nebentypus
  $\varepsilon$.
		
\item Show how Ellenberg's ``large image'' result can be adapted
  (under some hypothesis) to real quadratic fields and how it can be
  used to discard modular forms with complex multiplication.
		
\item Explain why the case $d$ positive is harder due to potential
  existence of non-trivial primitive solutions for all exponents $p$.
\end{itemize}
Section~\ref{sec:examples} contains different examples aiming to
explain the difference between the Hilbert/$\Q$-curves computational
effort. We also explain why in some cases there exist non-trivial
solutions of (\ref{eq:24p}) with $c= \pm 1$, which are valid for all
exponents $p$, making the modular approach fail. At last, we
explain why when there are modular forms with complex multiplication to be discarded,
classical results give a partial result for all primes satisfying some
congruence. We provide an example ($d = 3 \cdot 43$) where Ellenberg's
large image result applies, and a non-existence result for all large
enough primes can be obtained.

The article is organized as follows:
Section~\ref{section:modularmethod} contains a quick review of the
strategy developed in \cite{PT} as well as a review of the modular method.
In Section~\ref{sectionHeckeChar} (Theorem~\ref{thm:charexistence}) we solve the first problem described
above, namely the existence of a Hecke character with the desired
properties. The good definition of the character is related to a very
interesting problem of class field theory, namely suppose that
$K=\Q(\sqrt{d})$ is a real quadratic field, and $\epsilon$ is a
totally positive fundamental unit congruent to $1$ modulo $8$ (such
assumption is for expository purposes only, we consider the general
case in the article). Then the extension $K(\sqrt{\epsilon})$ is a
quadratic unramified extension of $K$, hence by class field theory it
corresponds to a genus character (see for example Chapter 2 of
\cite{MR3236783}). Is there a natural description for such character?
Can the extension $K(\sqrt{\epsilon})$ be described in terms of $d$?
	
We give a positive answer to this problem (Theorem~\ref{thm:epsilondescription}), which plays a crucial role
in the proof of the good definition of our Hecke character. The third
section (Theorem~\ref{thm:levelandnebentypus}) settles the second issue, namely it gives a precise recipe for
$N$ and $\varepsilon$. A proof of such statement was given in
\cite{PT} when $K$ is imaginary quadratic, since the Nebentypus had a
unique candidate due to the fact that it was odd. For real quadratic
fields, the hard part is to prove the formula for the Nebentypus! We
do so by computing explicitly an action on $3$-torsion points. The
proof might be of independent interest.
	
The fourth section gives an explicit version of Ellenberg's large image result for
real quadratic fields where the prime $2$ splits. The proof follows
from an ``explicit'' version of the main result of \cite{LeFourn}; our
little contribution being making the constants explicit. The last
section contains the examples, where the cases $d=6$ and $d=129$ are
specially considered along with other values of $d$ between $1$ and
$20$ (see Table \ref{table:examplesd}). Here are two instances of the results proved in the present article:
	
\begin{thmA} Let $p>19$ be a prime number such that $p\neq 97$ and
  $p\equiv1,3\pmod8$. Then, $(\pm 7,\pm20,1)$ are the only non-trivial primitive
  solutions of the equation
  \[ x^4-6y^2=z^p.
  \]
\end{thmA}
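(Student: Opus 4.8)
The plan is to run the $\Q$-curve modular method developed in the previous sections on the Frey curve attached to a putative solution, and then to eliminate the resulting finite list of newforms. Suppose $(A,B,C)$ is a non-trivial primitive solution of $x^4-6y^2=z^p$ with $C\neq\pm1$; the excluded case $C=\pm1$ is precisely what produces the family $(\pm7,\pm20,1)$ (since $7^4-6\cdot20^2=1$), valid for every exponent, so it must be allowed. To such a solution we attach the Frey curve $E_{(A,B)}$ of \eqref{eq:freycurve} over $K=\Q(\sqrt{6})$, which is a $\Q$-curve. Twisting by the Hecke character constructed in Section~\ref{sectionHeckeChar} (whose existence and prescribed local behavior rest on the description of $K(\sqrt{\epsilon})/K$, and whose good definition is where the congruence $p\equiv1,3\pmod8$ enters) yields a two-dimensional mod-$p$ representation $\overline{\rho}$ of $G_{\QQ}$, whose predicted level $N$ and Nebentypus $\varepsilon$ are given explicitly by the recipe of the second section in terms of the primes $2,3$ and the coprimality of $(A,B,C)$.

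First I would check that $\overline{\rho}$ is absolutely irreducible for $p$ large, so that Serre's modularity conjecture, combined with the known modularity of $E_{(A,B)}$ over the real quadratic field $K$, produces a classical weight-two newform $f$ with $\overline{\rho}_f\simeq\overline{\rho}$; Ribet's level-lowering then places $f$ in the explicit finite-dimensional space $S_2(N,\varepsilon)$ predicted above, with $N$ supported only on $2$ and $3$. I would then enumerate the newforms of $S_2(N,\varepsilon)$ as in the examples section, separating those with and without complex multiplication.

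For the non-CM newforms $f$, elimination is by local obstructions: for a suitable set of auxiliary primes $\ell$ the trace $a_\ell(E_{(A,B)})$ takes only finitely many values as $(A,B)$ ranges over the residues compatible with a primitive solution, and comparing these with $a_\ell(f)$ modulo a prime above $p$ discards each incompatible form for all but finitely many $p$. It is this bookkeeping that produces the precise exceptional set: carrying it through leaves the lower bound $p>19$ and a single stray form surviving up to $p=97$. For the newforms $f$ with complex multiplication the trace comparison alone is insufficient, and here I would invoke the explicit large-image result of the third section: the adaptation of Ellenberg's argument through the effective form of \cite{LeFourn} shows that, for $p$ beyond an explicit constant, the image of $\overline{\rho}$ cannot lie in the normalizer of a Cartan subgroup of $\GL_2(\F_p)$, contradicting the CM of $f$.

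The hard part will be the CM case. Unlike the imaginary quadratic setting of \cite{PT}, over a real quadratic field one cannot discard CM forms by parity of the Nebentypus (it is even), so the entire weight of the argument falls on the explicit large-image bound; one must verify that, once the effective constant is computed, the small primes up to $19$ together with $97$ are genuinely the only exceptions that remain, and that $p\equiv1,3\pmod8$ is exactly the hypothesis under which the Hecke character—hence $\overline{\rho}$ and its prescribed level and Nebentypus—is defined in the first place.
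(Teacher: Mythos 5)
There is a genuine gap in how you propose to dispose of the CM forms, and that is the crux of the theorem. You plan to invoke the explicit large-image result of Section~\ref{section:Ellenberg}, but that machinery is unavailable for $d=6$: by Proposition~\ref{prop:2inert}, a newform with the required Atkin--Lehner signs and non-vanishing twisted central value can only exist when $2$ splits in $K/\Q$, whereas $2$ ramifies in $\Q(\sqrt{6})$. (This is exactly why the paper reserves that argument for $d=129$.) The paper's actual route for $d=6$ is different: the forms surviving Mazur's trick come from $(\pm1,0,1)$ and --- by a coincidence one must verify via the $j$-invariant $188837384000\pm77092288000\sqrt{6}$ --- from $(\pm7,\pm20,1)$, and all of them have CM by $\Z[\sqrt{-2}]$; the hypothesis $p\equiv1,3\pmod 8$ forces $p$ to split in $\Q(\sqrt{-2})$, so the residual representation of such a CM form has projective image in the normalizer of a \emph{split} Cartan subgroup, and this contradicts \cite[Proposition 3.4]{MR2075481} once $C$ is divisible by a prime greater than $3$. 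So the congruence on $p$ is not, as you assert, what makes the Hecke character well defined (the character exists for every $d$ by Theorem~\ref{thm:charexistence}); it is the substitute for the large-image argument.

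You also misplace the origin of the exceptional primes: $p>19$ and $p\neq 97$ do not come out of the Mazur's-trick bookkeeping but from the absolute-irreducibility step. The paper applies \cite[Theorem 1]{MR3346965} with the fundamental unit $\epsilon=5+2\sqrt{6}$, giving $\lcm(\norm(\epsilon^{12}-1),\norm(\overline{\epsilon}^{12}-1))=2^7\cdot3^5\cdot5^2\cdot11^2\cdot97^2$, together with the trace $a_{\id{q}}=\pm2$ at the prime $\id{q}=\langle 3+\sqrt{6}\rangle$ of good reduction; the resultant of $x^2\pm2x+3$ with $x^{12}-1$ is supported on $\{2,3,19,97\}$, and that is where $19$ and $97$ enter. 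The rest of your outline (Frey curve, twist by $\chi$, level and Nebentypus from Theorem~\ref{thm:levelandnebentypus}, level lowering into $S_2(2^8\cdot3,\varepsilon)$ and $S_2(2^9\cdot3,\varepsilon)$ with $\varepsilon$ attached to $\Q(\sqrt{3})$, elimination of the non-CM classes by comparing traces at small auxiliary primes) does match the paper.
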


\begin{thmB} Let $p>19$ be a prime number satisfying that either
  $p>900$ or $p\equiv 1,3\pmod 8$ and $p \neq 43$. Then there are no non-trivial primitive
  solutions of the equation
  \[ x^4-129y^2=z^p.
  \]
\end{thmB}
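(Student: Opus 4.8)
The plan is to follow the modular method via $\Q$-curves as set up in the earlier sections. Let me think about what Theorem~\ref{thm:cased=129} requires and how I would attack it.

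The equation is $x^4 - 129 y^2 = z^p$ with $d = 129 = 3 \cdot 43$. So $K = \Q(\sqrt{129})$ is a real quadratic field. We associate the Frey curve $E_{(A,B)}: y^2 = x^3 + 4Ax^2 + 2(A^2 + rB)x$ where $r^2 = 129$. This is a $\Q$-curve, so its mod-$p$ Galois representation, after twisting, descends to $\Q$, and we get a modular form of some level $N$ and Nebentypus $\varepsilon$ (computed via the recipe from the second section). The strategy is: (1) determine $N$ and $\varepsilon$, (2) enumerate the newforms in that space, (3) discard each via local obstructions at small primes and via the large image / CM arguments from the third section.

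Let me think about the structure more concretely. The statement has two alternative hypotheses on $p$: either $p > 64690$ (a large explicit bound) or $p \equiv 1, 3 \pmod 8$ (a congruence condition). This split strongly suggests the two tools being combined: the congruence condition $p \equiv 1, 3 \pmod 8$ presumably makes $2$ split in $K$ (or relates to how $2$ behaves), letting the "explicit Ellenberg" result from Section 3 apply — that result requires $2$ to split. For those primes, we can discard non-CM forms by the large image result. The large bound $p > 64690$ is the fully effective bound that works for ALL residues (via the general machinery) — this is the LeFourn-based explicit constant. And the CM forms are handled separately: when there's a CM form in the space, classical arguments only give results for primes in certain congruence classes, OR the large image bound finishes them off.

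So here is the plan. First I would compute the conductor. Since $129 = 3 \cdot 43$ and the fundamental unit / class field theory description from Section 1 controls the Nebentypus $\varepsilon$, I would apply the recipe of Section 2 to get a specific value of $N$ and $\varepsilon$. Then I would compute (by machine) the finite list of newforms of level $N$, weight $2$, Nebentypus $\varepsilon$ over $\Q$. For each newform $g$ in the list, I would attempt to show it cannot be congruent mod $\mathfrak{p}$ (for $\mathfrak{p} \mid p$) to the form attached to the Frey curve, by comparing traces of Frobenius at a few auxiliary primes $\ell$ (those $\ell$ for which the Frey curve has only finitely many possible reductions) — this is the standard local obstruction / elimination step. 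The forms that survive this elimination must be the ones that are potentially dangerous, namely forms with complex multiplication, which correspond to actual solutions or to solutions with $C = \pm 1$.

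The main obstacle will be the CM forms. As the introduction explicitly warns, for positive $d$ there can be non-trivial solutions with $C = \pm 1$ valid for all $p$, which produce CM newforms that cannot be eliminated by a local obstruction at a fixed prime. For $d = 129$, the claim is that there are genuinely no non-trivial solutions, so any CM form in the space must be discarded by a different mechanism. This is exactly where the two hypotheses on $p$ enter. For the CM forms, I would: (a) invoke the explicit Ellenberg/large-image result of Section~\ref{...} — valid when $2$ splits, i.e.\ for $p \equiv 1,3 \pmod 8$ — to rule out the corresponding projective image being small and hence rule out the congruence for those residue classes; and (b) for the complementary residues, use the fully explicit bound so that every $p > 64690$ is covered regardless of its residue. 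Combining, a solution can exist only if $p$ fails both hypotheses, i.e.\ $p \le 64690$ and $p \equiv 5, 7 \pmod 8$. The remaining finitely many primes in that range would be cleared by a direct (if heavier) computation, or the argument is arranged so the bound already subsumes them.

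Concretely, the steps in order:

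\begin{enumerate}
\item Compute $N$ and $\varepsilon$ for $d = 129$ using the recipe of Section~\ref{sec:recipe} (the level from the conductor of the Frey curve together with the Hecke-character data of Section~\ref{sectionHeckeChar}, the Nebentypus from the $3$-torsion computation).

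\item Enumerate the newforms $g$ of weight $2$, level $N$, Nebentypus $\varepsilon$. Separate them into CM and non-CM forms.

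\item (Local elimination.) For each non-CM $g$, find an auxiliary prime $\ell$ and compare $a_\ell(g)$ with the finitely many possible values of $a_\ell$ of the Frey representation; show the congruence mod $\mathfrak{p}$ fails for all $p$ outside an explicit small set, then clear that set directly. This discards all non-CM forms.

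\item (CM forms, large primes.) Apply the explicit version of Ellenberg's large-image theorem from Section~\ref{sec:ellenberg} (valid since $2$ splits in $\Q(\sqrt{129})$ for the relevant residues) to force the image of the Frey representation to be large, contradicting a match with a CM form; this yields the bound $p > 64690$ covering all residues, and the sharper congruence condition $p \equiv 1,3 \pmod 8$ covering the split case with no size restriction.

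\item Combine: a hypothetical non-trivial solution forces $p$ to violate both hypotheses; check the finitely many exceptional primes remain excluded, proving no non-trivial solutions under the stated hypotheses.
\end{enumerate}

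The genuinely hard part is Step~4: controlling the CM forms. Unlike the non-CM case, no single congruence obstruction eliminates them, so the argument must route through the image-size theory, and the arithmetic of $\Q(\sqrt{129})$ (the splitting of $2$, the fundamental unit, the genus character from Section~\ref{sectionHeckeChar}) is what determines precisely which residue classes the clean congruence argument handles versus which ones need the large explicit bound. Getting the constant $64690$ requires the explicit constants from the adaptation of \cite{LeFourn}, so the delicacy is entirely in making that bound effective and in verifying the CM forms cannot persist.
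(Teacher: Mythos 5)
Your overall skeleton (level/Nebentypus recipe, lowering the level, Mazur's trick for non-CM forms, a large-image argument for the CM forms, with the two hypotheses on $p$ corresponding to two different mechanisms for killing the CM forms) matches the paper's strategy. But there are two genuine problems. First, you never address irreducibility of the residual representation, which is a prerequisite for Ribet's lowering-the-level step and is where the paper spends roughly half of its proof: the bound from \cite{MR3346965} applied naively leaves a large exceptional set (up to $411577$), so the paper runs a separate reducibility analysis --- if $\overline{\rho}_{E,p}^{ss}=\theta_1\oplus\theta_2$, the $\theta_i$ can only ramify above $2$, the possible conductors at the two primes above $2$ force the relevant ray class groups to have exponent $2$ or $4$, hence a twist of $E$ acquires a rational point over an extension of degree at most $4$, giving $p\le 17$ by \cite{1707.00364}. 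Without some version of this step your argument does not get off the ground for the primes in the exceptional set.

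Second, your account of why the two hypotheses appear is confused. The splitting of $2$ is a property of $K=\Q(\sqrt{129})$ alone (it splits because $129\equiv 1\pmod 8$) and has nothing to do with the residue of $p$; it is what makes Proposition~\ref{prop:2inert} non-vacuous and hence allows the explicit Le Fourn--Ellenberg computation of Section~\ref{section:Ellenberg}, which produces the bound $p>64690$ \emph{for all residue classes of $p$}. The condition $p\equiv 1,3\pmod 8$ plays an entirely different role: it says $p$ splits in the CM field $\Q(\sqrt{-2})$ of the surviving forms, so the projective image of the residual representation of a CM form lies in the normalizer of a \emph{split} Cartan, which contradicts \cite[Proposition 3.4]{MR2075481} directly (using that $C$ is odd, prime to $3$, and divisible by a prime greater than $3$ because there are no $C=\pm1$ solutions for $d=129$); no $L$-value nontriviality is needed there. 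Your step~5 about ``clearing the finitely many exceptional primes'' is also unnecessary --- the theorem's hypothesis is the disjunction, so primes with $p\le 64690$ and $p\equiv 5,7\pmod 8$ are simply outside the scope of the claim.
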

	
We want to remark that the techniques and methods developed in the
present article can be used to study the equation $x^2-dy^6 = z^p$ for
positive values of $d$ following the results of \cite{PT}. The code
in \verb*|PARI/GP| (\cite{PARI2}) and \verb*|Magma| (\cite{MR1484478}) used in the examples (and the outputs), as well the one used to verify
Tables~\ref{table:cased3and7}, \ref{table:cased11and15} and
\ref{table:cased2}, are available at the web page
\url{https://github.com/lucasvillagra/Q-curves2.git}.
	
\subsection*{Acknowledgments} We would like to thank Yingkun Li for
sharing with us a proof of Theorem~\ref{thm:epsilondescription} and to
Harald Helfgott for providing some bounds used in
Section~\ref{section:Ellenberg}. We also want to thank Nuno Freitas
for some useful comments on the proofs of
Section~\ref{sec:nebentypus}. At last, we want to thank the referee
for his/her suggestions that improved the quality of the present
article. This research was partially supported by FonCyT BID-PICT
2018-02073 and by the Portuguese Foundation for Science and Technology
(FCT) within project UIDB/04106/2020 (CIDMA).

\section{Brief review of the modular method}\label{section:modularmethod}

Let us recall briefly how the modular method works. To a putative
primitive solution $(a,b,c)$ of~(\ref{eq:24p}), attach the elliptic
curve $\E$  given by the equation
\begin{equation}
  \label{eq:freycurve}
  \E: y^2=x^3+4ax^2+2(a^2+\sqrt{d}b)x,
\end{equation}
defined over the quadratic field $K = \Q(\sqrt{d})$. Let $\Gal_K$
denote an absolute Galois group of $K$, i.e.
$\Gal_K: = \Gal(\overline{\Q}/K)$ and for $p$ a prime number, let
$\rho_{\E,p}: \Gal_K \to \GL_2(\Z_p)$ denote the $2$-dimensional
$p$-adic Galois representation attached to $\E$ (obtained by looking
at the action of the Galois group on the $p$-adic Tate module of the
curve $\E$).  The curve $\E$ is what is called a $\Q$-curve, namely
its Galois conjugate is isogenous (via the order $2$ isogeny whose
kernel is the point $(0,0)$) to itself (see for example
\cite[Proposition 2.2]{PT}). The problem is that the isogeny is not
defined over $K$ but over $K(\sqrt{-2})$, so the Galois representation
$\rho_{\E,p}$ does not extend to a $2$-dimensional representation of
the whole Galois group $\Gal_\Q:=\Gal(\overline{\Q}/\Q)$.  However,
there exists a character $\chi$ (that will be constructed in the next
section) such that the twisted representation
$\rho_{\E,p} \otimes \chi$ does extend to an odd two dimensional
Galois representation of the whole Galois group
$\Gal(\overline{\Q}/\Q)$. Let $\tilde{\rho_p}$ denote such an
extension.

It is well known that modularity of the representation
$\tilde{\rho}_p$ follows from Serre's modularity conjecture (see
\cite[Theorem 4.4]{MR2058653}). As a side remark, Ribet's proof uses
the fact that our representation is related to an abelian variety of
$\GL_2$-type. Modularity of a two dimensional odd abstract
representations (satisfying the usual geometric hypothesis) is also
known if $p \ge 5$ (see \cite[Theorem 1.0.4]{1901.07166}). In
particular, $\tilde{\rho}_p$ matches the Galois representation of a
weight $2$, level $\tilde{N}$ and Nebentypus $\varepsilon$ newform
$f_{(a,b,c)}$ (the level and Nebentypus are described explicitly in
Theorem~\ref{thm:levelandnebentypus}).

The classical Hellegouarch result implies that our residual
representation $\overline{\rho_{\E,p} \otimes \chi}$ is unramified at
all primes not dividing $2d$, and the same holds for
$\overline{\tilde{\rho}}$. Suppose that $p$ is a prime number such
that the residual representation of $\tilde{\rho}_p$ is absolutely
irreducible. Then Ribet's lowering the level result (\cite{MR1104839})
implies that we have a congruence modulo $p$ between our newform
$f_{(a,b,c)}$ and a newform $g_{(a,b,c)}$ whose level $N$ is only
divisible by primes dividing $2d$ and with the same Nebentypus. We are
now led to ``discard'' the newforms $g \in S_2(N,\varepsilon)$ that do
not come from real solutions.

The first elimination process consists in applying the so called
``Mazur's trick'', namely check whether the eigenvalues are consistent
with a ``local'' solution of the original equation. More concretely,
suppose we intend to discard a form $g$. Let $q$ be a prime number
such that $q\nmid 2pd$, and let
\[
C(q,g) = \prod_{(a,b,c) \in \F_q^3}B(q,g;a,b,c),
\]
where the product is over non-zero triples $(a,b,c)$ satisfying~(\ref{eq:24p})
modulo $q$, and where the number $B(q,g;a,b,c)$ is defined by
  \[
    B(q,g;a,b,c)=\begin{cases}
      \norm(a_{\id{q}}(\E)\chi(\id{q})-a_q(g)) & \text{ if } q \nmid c \text{ and } q \text{ splits as }q = \id{q}\overline{\id{q}},\\
      \norm(a_q(g)^2-a_q(\E)\chi(q)-2q\varepsilon(q)) & \text{ if } q \nmid c \text{ and } q \text{ is inert in }K,\\
      \norm(\varepsilon^{-1}(q)(q+1)^2-a_q(g)^2) & \text{ if }q \mid c.
    \end{cases}
  \]
  If $(a,b,c)$ is a solution of (\ref{eq:24p}) and
  $g \in S_2(N,\varepsilon)$ is congruent modulo $p$ to
  $f_{(a,b,c)}$, it must be the case that $p\mid C(q,g)$ for all prime
  numbers $q$ (see \cite[Proposition 6.1]{PT}). We say that the form
  $g$ passes the test if $C(q,g) \neq 0$ for some small prime $q$.  If all
  newform pass the test, we can conclude that no such a solution exists
  (which never happens, due to the existence of a trivial solution).
	
  If $(a,b,c)$ is a solution of equation~(\ref{eq:24p}) for all primes
  $p$, and $g \in S_2(N,\varepsilon)$ is the modular form
  congruent modulo $p$ to $f_{(a,b,c)}$ then $C(q,g)=0$ for all primes
  $q$, so the above method fails. This occurs precisely when
  $c = \pm 1$. When $d<0$, the only solutions with $c = \pm 1$ are the
  trivial ones, but the Frey curves $E_{(\pm1,0,1)}$ have complex
  multiplication. To discard forms with complex multiplication
  Ellenberg's result ({\cite[Theorem 3.14]{MR2075481}}) is
  needed. Modular forms with complex multiplication have the property
  that the image of their Galois representations are not as large as
  expected (their image lies in the normalizer of a Cartan group),
  while for $\Q$-curves without complex multiplication Ellenberg's
  results implies that their projective residual image contain
  $\PSL_2(\F_p)$, hence they cannot be congruent.  This is the reason
  why we could prove non-existence of non-trivial primitive solutions
  of (\ref{eq:24p}) for different negative values of $d$ in \cite{PT}.
	
  There are two unfortunate situations where the previous approach
  cannot be applied. One of them is when \cite[Theorem
  3.14]{MR2075481} cannot be applied. Then we can only hope to prove
  non-existence of solutions for primes satisfying certain congruence
  properties (the ones where the curve coming from the trivial
  solution has small image, namely its residual image is contained in
  the normalizer of a split Cartan subgroup). The second one (which
  only occurs when $d>0$) is when the curve
	\begin{equation} x^4-dy^2=\pm1
		\label{eq:C=1}
	\end{equation} admits non-trivial solutions. For $1< d < 20$,
the non-trivial solutions of such an equation are precisely the following
\begin{equation}\label{eq:solutionsc=1}
(a,b,c,d)\in\{(\pm1,\pm1,-1,2),
(\pm3,\pm4,1,5),(\pm7,\pm20,1,6),(\pm2,\pm1,1,15),(\pm2,\pm1,-1,17)\}.
\end{equation}
	 
	 Equation ~(\ref{eq:C=1}) was studied in several articles
(see for example \cite{MR1770484}). It is known that the equation with
$+1$ on the right hand side has at most one non-trivial solution
(see \cite{MR12619}) except when $d=1785$. Furthermore, in
\cite{MR1438594} all solutions for $1\leq d \leq 150000$ are
computed. The equation with $-1$ on the right hand side was studied in
\cite{MR0065575}, where it is also shown that in all cases there is at
most one non-trivial solution, and a condition for the existence is
presented. A priori, the modular method should not work in cases when
there exists a solution  of (\ref{eq:C=1}) (although we
will soon prove it does work for $d=6$).

\section{Construction of the Hecke character}\label{sectionHeckeChar}

Given $\tau \in \Gal_\Q$ and $\rho$ a representation of $\Gal_K$, by
$^\tau\rho$ we denote the representation of $\Gal_K$ whose value at
$\sigma \in \Gal_K$ equals
\[
^\tau\rho(\sigma) = \rho(\tau \sigma \tau^{-1}).
\]
Fix an element $\tau \in \Gal_\Q$ which is not the identity on
$K$. Then the curve $\tau(\E)$ is isogenous to
$\E \otimes \delta_{-2}$ (the quadratic twist of the curve by $-2$) as
proved in \cite[Proposition 2.2]{PT}. This implies that 
\begin{equation}
  \label{eq:transformation}
^\tau \rho_{\E,p} \simeq \rho_{\E,p} \otimes \delta_{-2},  
\end{equation}
where we interpret $\delta_{-2}$ as the quadratic character of
$\Gal_K$ corresponding (via class field theory) to the quadratic
extension $K(\sqrt{-2})/K$. Note that $\delta_{-2}$ is actually a
quadratic character of $\Gal_\Q$ restricted to $\Gal_K$.

\begin{remark}
  All the previous stated properties hold for any pair of rational
  numbers $(a,b)$ (independently on whether they are part of a
  solution of (\ref{eq:24p}) or not). The fact that they are a
  solution is needed while studying the Kodaira type at bad primes,
  and also (together with the extra hypothesis that the solution is
  primitive) to assure that the residual representation
  $\overline{\rho_{\E,p}}$ is unramified at all prime ideals not
  dividing $2$.
\end{remark}

The main idea of \cite{PT} is to construct a finite order Hecke
character $\chi$ satisfying also property~(\ref{eq:transformation})
(using class field theory, we will denote indistinctly Hecke
characters and their Galois characters counterparts). If
$\chi: \Gal_K \to \overline{\Q}^\times$ is a Hecke character satisfying
$^\tau\chi(\sigma):=\chi(\tau \sigma \tau^{-1}) = \chi(\sigma)
\delta_{-2}(\sigma)$ for all $\sigma\in \Gal_K$, then the twisted representation
$\rho_{\E,p} \otimes \chi$ is invariant under the action of $\tau$ and
hence extends to a $2$-dimensional representation of $\Gal_\Q$.  How
can we construct a Hecke character $\chi$ on the id\`ele group of $K$
(that we denote $\II_K$) satisfying that
$^\tau\chi = \chi \cdot \delta_{-2}$?

Let $\Om_K$ denote the ring of integers of $K$, and given $\id{q}$ a
prime ideal of $\Om_K$, let $\Om_{\id{q}}$ denote the completion of
$\Om_K$ at $\id{q}$. Let $\Cl(K)$ denote the class group of $K$. From
the short exact sequence
\begin{equation}
  \label{eq:ideleSES} \xymatrix{ 0 \ar[r] & K^\times
    \cdot(\prod_{\id{q}}\Om_{\id{q}}^\times \times (\RR^\times)^2) \ar[r]
    \ar[r] & \II_K \ar[r]^{\Id} & \Cl(K) \ar[r] & 0, }
\end{equation}
it is enough to define the character $\chi$ on
$\prod_{\id{q}}\Om_{\id{q}}^\times \times (\RR^\times)^2$, on
$K^\times$ (where the character is trivial) and on id\`eles
representing the class group of $K$ (i.e. elements of $\II_K$ that are
in bijection with representatives for the class group $\Cl(K)$ under
the map $\Id$). The intersection of these two subgroups
$(\prod_{\id{q}}\Om_{\id{q}}^\times \times (\RR^\times)^2) \cap
K^\times = \Om_K^\times$ imposes a \emph{compatibility} condition on
its definition, namely the product of the local components evaluated
at a unit equals $1$. When $d>0$ the ring
$\Om_K^\times = \langle -1,\epsilon\rangle$, where $\epsilon$ denotes
a fundamental unit, hence it is enough to check compatibility at both
such elements. The compatibility was proven in \cite[Theorem 3.4]{PT}
when the fundamental unit has norm $-1$, so, after replacing
$\epsilon$ by $-\epsilon$ if needed, we assume that $\epsilon$ is
totally positive.
	
Let us briefly recall the construction given in \cite{PT} (there is a
discrepancy with the definitions used in \cite{PT}, namely $d$ needs
to be changed to $-d$ in such article). Split the odd prime divisors
of $d$ into four different sets, namely:
\[ Q_i =\{ p \text{ prime}\; : \; p \mid d, \quad p \equiv i \pmod
  8\},
\] for $i=1, 3, 5, 7$. Let $\delta_{-1}$, $\delta_{2}$, $\delta_{-2}$
be the characters of $\ZZ$ corresponding to the quadratic extensions
$\QQ(\sqrt{-1})$, $\QQ(\sqrt{2})$ and $\QQ(\sqrt{-2})$ respectively
and (abusing notation) let $\delta_{-1}$, $\delta_{2}$, $\delta_{-2}$
also denote their local component at the prime $2$. Define a character
$\varepsilon:\II_\Q \to \overline{\Q}^\times$ (that will be the
Nebentypus of the extended Galois
representation) as follows:
\begin{itemize}
\item For primes $p\nmid d$ and  also for primes $p \in Q_1 \cup Q_7$, the character
  $\varepsilon_p:\ZZ_p^\times \to \overline{\Q}^\times$ is trivial.
		
\item For primes $p \in Q_3$, the character
  $\varepsilon_p(n) = \kro{n}{p}$ (quadratic).
		
\item For $p \in Q_5$, let $\varepsilon_p$ be a character of order $4$
  and conductor $p$.
\item The character $\varepsilon_\infty$ (the archimedean component)
  is trivial.
\item Define $\varepsilon_2=\delta_{-1}^{\#Q_3+\#Q_5}$.
\end{itemize}
Since $\Q$ has class number one, the rational id\`eles $\II_\Q$ is
isomorphic to
$\Q^\times \cdot (\prod_p \Z_p^\times \times \RR^\times)$, hence our
local definitions give rise to a unique Hecke character $\varepsilon$
once the compatibility condition is checked. But
\[
\prod_p\varepsilon_p(-1)\varepsilon_{\infty}(-1) = \prod_{p \in Q_3\cup Q_5}\varepsilon_p(-1) \varepsilon_2(-1) = (-1)^{\#Q_3+\#Q_5}\varepsilon_2(-1)=1.
\]
By class field theory, $\varepsilon$ gets identified with a character
$\varepsilon:\Gal_\Q \to \overline{\Q}^\times$ whose kernel fixes a
totally real field $L$ whose degree equals $1$ if $Q_3=Q_5=\emptyset$,
$2$ if $Q_3\neq Q_5 = \emptyset$ and $4$ otherwise.  Let
$N_\varepsilon$ denote its conductor, given by
$N_\varepsilon= 2^e\prod_{p \in Q_3 \cup Q_5}p$, where $e=0$ if
$\#Q_3+\#Q_5$ is even and $2$ otherwise. If $p$ is an odd prime
dividing $d$, we denote by $\id{p}$ the unique prime in $K$ dividing
it.
\begin{thm} There exists a
  Hecke character $\chi:\Gal_K \to \overline{\QQ}^\times$ such that:
  \begin{enumerate}
  \item $\chi^2 = \varepsilon$ as characters of $\Gal_K$,
			
  \item $\chi$ is unramified at primes not dividing
    $2 \cdot \prod_{p \in Q_1 \cup Q_5 \cup Q_7}p$,
			
  \item for $\tau$ in the above hypothesis,
    $^\tau\chi = \chi \cdot \delta_{-2}$ as characters of $\Gal_K$.
  \end{enumerate}
  \label{thm:charexistence} Furthermore, if $d$ denotes the
  discriminant of $K$ and $\id{p}_2$ is a prime of $K$ dividing $2$, then its conductor equals
  $\id{p}_2^e \cdot \prod_{p \in Q_1 \cup Q_5 \cup Q_7} \id{p}$, where
  \[ e=
    \begin{cases}
      5 & \text{ if } d/4 \equiv 7 \pmod 8,\\
      3 & \text{ if \; \;} d \equiv 1 \pmod 4,\\
      3 & \text{ if } d/4 \equiv 2, 3 \pmod {8},\\
      3 & \text{ if } d/4 \equiv 6 \pmod {16},\\
      0 & \text{ if } d/4 \equiv 14 \pmod {16}.\\
    \end{cases}
  \]
\end{thm}
The theorem was proved in \cite{PT} (Theorem 3.2) for $d<0$
and for $d>0$ when the fundamental unit $\epsilon$ has norm $-1$. The
main obstacle in the remaining case is to have some understanding on
the reduction of a positive fundamental unit modulo ramified primes of
$K$. Let us state the following related natural problem.
	
\medskip
	
\noindent {\bf Problem:} Let $K/\Q$ be a real quadratic field, and let
$\epsilon$ be a totally positive fundamental unit. What can be said of
the extension $K(\sqrt{\epsilon})/K$?  \medskip
	
Suppose that $K=\Q(\sqrt{d})$ with $d$ a positive fundamental 
discriminant (i.e. equals the discriminant of the extension $K/\Q$).
Let $p \mid d$ be an odd prime and let $\id{p}$ denote
the unique prime ideal of $K$ dividing it. The hypothesis
$\norm(\epsilon) = 1$ implies that
$\epsilon \equiv \pm 1 \pmod{\id{p}}$. Let
	\[ \bfP_{\pm} = \{p \mid d, \; p\text{ odd } \; : \;
\epsilon \equiv \pm 1 \pmod{\id{p}}\}.
\]
If $2$ ramifies in $K/\Q$, let
$\id{p}_2$ denote the unique prime of $K$ dividing it.
\begin{thm} Let $\omega:= \prod_{p \in \bfP_{-}} p$. Then:
  \begin{itemize}
  \item if $2$ is unramified in $K/\Q$, we have
    $K(\sqrt{\epsilon}) = K(\sqrt{\omega})$,
  \item if $2$ is ramified in $K/\Q$, we have
    $K(\sqrt{\epsilon}) = K(\sqrt{2\omega})$ or
    $K(\sqrt{\epsilon}) = K(\sqrt{\omega})$.
  \end{itemize}
Furthermore, when $8 \mid d$,
the latter case occurs precisely when
$\epsilon \equiv -1 \pmod{\id{p}_2^3}$.
  \label{thm:epsilondescription}
\end{thm}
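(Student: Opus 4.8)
The plan is to determine the quadratic extension $K(\sqrt{\epsilon})/K$ by analyzing its ramification. Since $\epsilon$ is a unit, the extension $K(\sqrt{\epsilon})$ is unramified away from $2$ (and possibly the primes above $2$), so by genus theory it corresponds to a subfield of the genus field of $K$, i.e. to a product $K(\sqrt{m})$ where $m$ is a product of prime discriminants dividing $d$. The key invariant to compute is, for each odd prime $p \mid d$ with prime $\id{p}$ above it, whether $\id{p}$ ramifies in $K(\sqrt{\epsilon})/K$. The ramification of $\id{p}$ is governed by whether $\epsilon$ is a square in the completion $K_{\id{p}}^\times$; since $\epsilon$ is a $\id{p}$-adic unit, this is controlled by the reduction $\epsilon \bmod \id{p} \in \F_p^\times$ (as $p$ is odd), namely $\id{p}$ is unramified in $K(\sqrt{\epsilon})$ exactly when $\epsilon$ is a square mod $\id{p}$.

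\textbf{The central computation.} Because $\norm(\epsilon)=1$, we have $\epsilon \cdot \conjugate{\epsilon} = 1$ and $\epsilon \equiv \conjugate{\epsilon} \pmod{\id{p}}$ (the residue field is $\F_p$ and $\id{p}$ is Galois-stable), forcing $\epsilon^2 \equiv 1$, hence $\epsilon \equiv \pm 1 \pmod{\id{p}}$; this is exactly the dichotomy defining $\bfP_+$ and $\bfP_-$. When $\epsilon \equiv +1$, it is trivially a square mod $\id{p}$, so $\id{p}$ is unramified in $K(\sqrt{\epsilon})$. When $\epsilon \equiv -1$, whether $-1$ is a square in $\F_p^\times$ depends on $p \bmod 4$ — and here I would exploit the real-quadratic constraints: the odd primes dividing a fundamental discriminant $d$ satisfy prescribed congruences, and $-1$ being a square means $p\equiv 1\pmod 4$. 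I first establish that $\id{p}$ ramifies in $K(\sqrt{\epsilon})$ precisely for $p \in \bfP_-$ (after accounting for these congruence conditions), which pins down the odd part of the extension as $K(\sqrt{d_0})$ with $d_0 = \prod_{p\in\bfP_-}p$, matching the claim; the remaining ambiguity is only at the prime $2$.

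\textbf{The prime $2$.} The unramified case ($2$ unramified in $K/\Q$, i.e. $d\equiv 1\pmod 4$) is cleanest: $K(\sqrt{\epsilon})/K$ can ramify only at primes above $2$, but an unramified quadratic extension of $K$ must be unramified everywhere (it is a genus-field extension), so I would check that the $2$-part is forced to be trivial, giving $K(\sqrt{\epsilon})=K(\sqrt{d_0})$ outright. When $2$ ramifies in $K/\Q$ there is genuine ambiguity: the extension $K(\sqrt{\epsilon})$ differs from $K(\sqrt{d_0})$ by a possible factor of $\sqrt{2}$, i.e.\ it equals $K(\sqrt{d_0})$ or $K(\sqrt{2d_0})$, according to whether $\id{p}_2$ is unramified or ramified in $K(\sqrt{\epsilon})/K$. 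This is decided by whether $\epsilon$ is a square in $K_{\id{p}_2}^\times$, a dyadic local-square condition. I would resolve this by refining the reduction of $\epsilon$ modulo higher powers of $\id{p}_2$: a unit in the dyadic local field is a square iff it is a square modulo a suitable power of $\id{p}_2$ (governed by the different / by $2\cdot\val_{\id{p}_2}(2)$). In the subcase $8\mid d$ I would show this local-square condition is equivalent to the stated congruence $\epsilon \equiv -1 \pmod{\id{p}_2^3}$.

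\textbf{Main obstacle.} The odd primes are routine once the $\pm 1$ reduction and the $p\bmod 4$ dependence of the Legendre symbol are in hand; the real difficulty is the dyadic place. Squares in a ramified dyadic local field are detected not by reduction modulo $\id{p}_2$ but modulo a higher power, and tracking the exact power needed — together with translating ``$\epsilon$ is a local square at $\id{p}_2$'' into the explicit congruence $\epsilon \equiv -1 \pmod{\id{p}_2^3}$ in the $8 \mid d$ case — is where I expect the argument to require the most care. This is presumably why the authors acknowledge an outside proof (due to Yingkun Li) for this theorem.
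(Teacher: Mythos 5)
There is a genuine gap at the heart of your \textbf{central computation}. For an odd prime $\id{p}$ and a unit $\epsilon$, the extension $K(\sqrt{\epsilon})/K$ is \emph{always} unramified at $\id{p}$: since $v_{\id{p}}(\epsilon)=0$ and the residue characteristic is odd, $K_{\id{p}}(\sqrt{\epsilon})$ is either trivial or the unramified quadratic extension of $K_{\id{p}}$. The residue symbol of $\epsilon$ modulo $\id{p}$ detects whether $\id{p}$ \emph{splits} or is \emph{inert}, not whether it ramifies. So the invariant you propose to compute (``does $\id{p}$ ramify in $K(\sqrt{\epsilon})/K$?'') is identically trivial --- it is trivial for every candidate subfield $K(\sqrt{m})$ of the genus field as well --- and cannot single out $\bfP_-$. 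If you repair this by replacing ramification with splitting, a second problem appears: you must match the Frobenius data of $K(\sqrt{\epsilon})$ at the ramified primes against that of each $K(\sqrt{m})$, $m\mid d$, and the latter is governed by Legendre symbols such as $\kro{m}{p}$ (or $\kro{md/p^2}{p}$ when $p\mid m$). Proving that these symbols reproduce the pattern $\epsilon\equiv\pm1\pmod{\id{p}}$ precisely for $m=d_0$ is a nontrivial reciprocity statement of R\'edei--Reichardt type that your sketch does not supply; worse, the pairing between ambiguous classes and genus characters can be degenerate, so splitting data at the ramified primes need not determine $m$ uniquely at all.

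The paper sidesteps all of this with an explicit construction that your outline is missing. Among the ambiguous ideals dividing the different, exactly one nontrivial one, $\D_0$, is generated by a totally positive element $\alpha$; after rescaling $\alpha$ by a power of $\epsilon$ one arranges $\epsilon=\alpha/\overline{\alpha}$ with $\norm(\alpha)=d_0$, whence $\sqrt{\epsilon}=\sqrt{d_0}/\overline{\alpha}$ and $K(\sqrt{\epsilon})=K(\sqrt{d_0})$ on the nose. The congruences $\epsilon\equiv-1\pmod{\id{p}}$ for odd $\id{p}\mid\D_0$ and $\epsilon\equiv+1\pmod{\id{p}}$ for the remaining ramified $\id{p}$ are then read off from $\epsilon\pm1=(\alpha\pm\overline{\alpha})/\alpha$, which is what identifies the odd part of $d_0$ with $\prod_{p\in\bfP_-}p$; the dyadic alternative ($d_0$ versus $2d_0$, and the criterion $\epsilon\equiv-1\pmod{\id{p}_2^3}$ when $8\mid d$) is settled by a valuation count on the same $\alpha$. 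Your discussion of the dyadic place (local squares detected modulo a bounded power of $\id{p}_2$) is reasonable in outline, but it sits downstream of the identification of $d_0$, which is exactly where your argument breaks.
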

\begin{proof} Let us recall some well known results on the narrow
  class group of a real quadratic field. The result is due mostly to
  Gauss \cite{MR837656} (see also \cite{MR1012948} for a more modern
  presentation), although Gauss' approach was via the study of
  indefinite binary quadratic forms. Among such forms, there are
  some special ones called ``ambiguous forms'' (see \cite{MR1012948}
  page 7 Chapter 1 and page 24 Chapter 3), which are precisely the
  elements of order two under Gauss' composition law. The total number
  of ambiguous classes (including the trivial one) equals $2^{t-1}$,
  where $t$ is the number of prime divisors of $d$ (by
  \cite[Proposition 4.7]{MR1012948} and its proof).

  Recall that there is a correspondence between strict equivalence
  classes of indefinite binary quadratic forms of discriminant $d$ and
  ideal classes for the narrow class group of $K$. Under this
  correspondence, the ambiguous forms map to ideals of order two in
  the narrow class group. But such ideals correspond precisely to the
  ramified prime ideals of $K$ (indexed by divisors of $d$), by
  \cite[Corollary 4.9]{MR1012948}. In particular, there exists a
  unique non-trivial and square-free principal ideal $\id{d}$
  (generated by a totally positive element $\alpha$) dividing the
  different $\D$ of $K$. Let
  $\omega:=\normid{d} = \norm(\alpha)= \alpha \overline{\alpha}$, so
  that $\omega \mid d$.
		
  Since all ramified primes are invariant under conjugation, and
  $\id{d}$ is divisible only by ramified primes,
  $\overline{\id{d}} = \id{d}$. Then the quotient
  $\frac{\alpha}{\overline{\alpha}} \in \Om_K$ is a totally positive
  unit which cannot be trivial (as otherwise $\alpha \in \Q_{>0}$, but
  it must divide the different of $K$ and also generate a square-free
  ideal of $\Om_K$, hence equals $1$). Substituting $\alpha$ by
  $\epsilon^k \alpha$ changes the quotient
  $\frac{\alpha}{\overline{\alpha}}$ by a factor of
  $\epsilon^{2k}$, so we can assume that
  \begin{equation}
    \frac{\alpha}{\overline{\alpha}} = \epsilon.
  \end{equation}
  Then
  $\sqrt{\epsilon} = \frac{\sqrt{\alpha
      \overline{\alpha}}}{\overline{\alpha}}$ and hence
  $K(\sqrt{\epsilon}) = K(\sqrt{\omega})$. We are led to determine the
  set of primes dividing $\omega$. Let $\id{p}$ be a prime ideal dividing
  $\D$ and assume that $\id{p} \nmid 2$.
  \begin{itemize}
			
  \item The fact that
    $\alpha + \overline{\alpha} \in \id{d} \cap \ZZ = (\omega)$ (which generates over $K$ the ideal $\id{d}^2$) implies
    that $\alpha + \overline{\alpha} \in \id{d}^2$, hence
    $\epsilon +1 = \frac{\alpha}{\overline{\alpha}}+1 = \frac{\alpha +
      \overline{\alpha}}{\overline{\alpha}} \in \id{d}$ and then
    $\epsilon \equiv -1 \pmod{\id{d}}$. In particular,
    $\epsilon \equiv -1 \pmod{\id{p}}$ for all odd prime ideals
    $\id{p} \mid \id{d}$.
			
  \item On the other hand, if $\id{p} \mid \D$ but $\id{p} \nmid \id{d}$
    (in particular $\id{p} \nmid \overline{\alpha}$),
    $\epsilon-1 = \frac{\alpha - \overline{\alpha}}{\overline{\alpha}} \equiv 0
    \pmod{\id{p}}$ hence $\epsilon \equiv 1 \pmod{\id{p}}$.
  \end{itemize}
  If $2 \nmid d$ then $\omega = \prod_{p \in \bfP_{-}} p$ and the statement follows.
  If $d$ is even the only ambiguity is whether $\omega$ is even or
  not. Suppose that $8 \mid d$. Let $\id{p}_2$ denote the prime ideal
  dividing $2$ ($\id{p}_2 = \langle2, \sqrt{d/4}\rangle$). Clearly
  $v_{\id{p}_2}(\alpha) = v_{\id{p}_2}(\bar{\alpha})=v_2(\omega)$. An
  elementary case by case analysis shows that
  $v_{\id{p}_2}(\alpha) \in \{0,2\}$ if and only if
  $v_{\id{p}_2}(\epsilon -1) \ge 3$ and $v_{\id{p}_2}(\epsilon
  +1)=2$. Similarly, $v_{\id{p}_2}(\alpha) \in \{1,3\}$ if and only if
  $v_{\id{p}_2}(\epsilon +1) \ge 3$ and $v_{\id{p}_2}(\epsilon -1) = 2$ as
  stated.
\end{proof}
	
\begin{proof}[Proof of Theorem~\ref{thm:charexistence}] Keeping the
  previous notation, let $d$ denote the discriminant of $K$. Let
  $\chi_\id{p}:\Om_{\id{p}}^\times \to \overline{\Q}^\times$ be the character given
  by
  \begin{itemize}
  \item If $\id{p}$ is an odd (i.e. $\id{p}\nmid 2$) unramified prime,
    $\chi_{\id{p}}$ is the trivial character. The same applies to
    primes in $K$ dividing the primes in $Q_3$.
			
  \item If $p$ is an odd prime ramifying in $K/\Q$ and
    $\id{p} \mid p$, clearly
    $(\Om_{\id{p}}/\id{p})^\times \simeq (\ZZ/p)^\times$. If
    $p \in Q_1 \cup Q_7$, let $\chi_{\id{p}}$ correspond to the
    quadratic character $\delta_p$ of $(\ZZ/p)^\times$.
  \item If $p \in Q_5$, using the previous item isomorphism, let
    $\chi_{\id{p}}= \varepsilon_p \cdot \delta_p$.
  \end{itemize} At the archimedean places $\{v_1,v_2\}$, let
  $\chi_{v_1}$ be the trivial character and $\chi_{v_2}$ be the sign
  function (the order of the archimedean places does not matter, both
  choices work). At a prime $\id{p}_2$ dividing $2$, the character
  $\chi_{\id{p}_2}$ has conductor at most $2^3$. The group structure of
  $(\Om_{\id{p}_2}/2^n)^\times$ and its generators when $2$ does not
  split are given in Table~\ref{table:quotientstructure} (see
  \cite{Ranum1910}). The generators are ordered so that the order of
  the generator $i$ matches the $i$-th factor of the group structure,
  while the elements norms are modulo $8$.

  \begin{table}[H]
  \begin{tabular}{|c|c|c|c|c|}
    \hline
    Condition & $n$ & Structure & Generators & Norms \\
    \hline\hline
    $d \equiv 5 \pmod 8$ & $3$ & $\Z/3 \times \Z/4 \times \Z/2 \times \Z/2$ &$\{\zeta_3,\sqrt{d},3+2\sqrt{d},-1\}$ & $\{1,3,5,1\}$\\
    \hline
    $d/4 \equiv 7 \pmod 8$ & $3$ & $\Z/4 \times \Z/4 \times \Z/2$& $\{\sqrt{d/4}, 1+2\sqrt{d/4},5\}$ & $\{1,5,1\}$ \\
    \hline
    $d/4 \equiv 3 \pmod 8$ & $3$ & $\Z/4 \times \Z/4 \times \Z/2$ &$\{\sqrt{d/4},1+2\sqrt{d/4},-1\}$ & $\{5,5,1\}$\\
    \hline
    $8 \mid d$ & $2$& $\Z/4 \times \Z/2 $ & $\{1+\sqrt{d/4},-1\}$ & $\{3,1\}$ \\
    \hline  
  \end{tabular}
  \caption{\label{table:quotientstructure}}
\end{table}
The definition of $\chi_{\id{p}_2}$ on this set of generators for
$(\Om_{\id{p}_2}/2^3)^\times$ is the following:
  \begin{itemize}
  \item If $d \equiv 5 \pmod 8$, $\chi_{\id{p}_2}(\zeta_3)=1$,
    $\chi_{\id{p}_2}(\sqrt{d})=i$, $\chi_{\id{p}_2}(3+2\sqrt{d})=1$,
    $\chi_{\id{p}_2}(-1)=1$.
  \item If $d/4 \equiv 7 \pmod {16}$, $\chi_{\id{p}_2}(\sqrt{d/4})=-1$,
    $\chi_{\id{p}_2}(1+2\sqrt{d/4})=1$, $\chi_{\id{p}_2}(5)=-1$.
  \item If $d/4 \equiv 15 \pmod {16}$, $\chi_{\id{p}_2}(\sqrt{d/4})=1$,
    $\chi_{\id{p}_2}(1+2\sqrt{d/4})=1$, $\chi_{\id{p}_2}(5)=-1$.
			
  \item If $d/4 \equiv 3 \pmod {16}$, $\chi_{\id{p}_2}(\sqrt{d/4})=-1$,
    $\chi_{\id{p}_2}(1+2\sqrt{d/4})=1$, $\chi_{\id{p}_2}(-1)=-1$.
			
  \item If $d/4 \equiv 11 \pmod {16}$, $\chi_{\id{p}_2}(\sqrt{d/4})=1$,
    $\chi_{\id{p}_2}(1+2\sqrt{d/4})=1$, $\chi_{\id{p}_2}(-1)=-1$.
			
  \item If $d/4 \equiv 6 \pmod 8$ and $\#Q_3+\#Q_5$ is even,
    $\chi_{\id{p}_2}(1+\sqrt{d/4})=1$, $\chi_{\id{p}_2}(-1)=1$,
    $\chi_{\id{p}_2}(5)=1$.
			
  \item If $d/4 \equiv 6 \pmod 8$ and $\#Q_3+\#Q_5$ is odd,
    $\chi_{\id{p}_2}(1+\sqrt{d/4})=i$, $\chi_{\id{p}_2}(-1)=-1$,
    $\chi_{\id{p}_2}(5)=1$.
			
  \item If $d/4 \equiv 2 \pmod 8$ and $\#Q_3+\#Q_5$ is even,
    $\chi_{\id{p}_2}(1+\sqrt{d/4})=1$, $\chi_{\id{p}_2}(-1)=-1$,
    $\chi_{\id{p}}(5)=1$.
			
  \item If $d/4 \equiv 2 \pmod 8$ and $\#Q_3+\#Q_5$ is odd,
    $\chi_{\id{p}_2}(1+\sqrt{d/4})=i$, $\chi_{\id{p}_2}(-1)=1$,
    $\chi_{\id{p}_2}(5)=1$.
  \end{itemize}
At last, 
\begin{itemize}
	\item If $d \equiv 1 \pmod 8$, the prime $2$ splits as
	$(2) = \id{p}_2 \overline{\id{p}}_2$. Let
	$\chi_{\id{p}_2}:=\delta_{-2}$ and $\chi_{\overline{\id{p}}_2}:=1$
	(trivial). 
\end{itemize}
Following the notation of \cite{PT}, we denote
$\chi_2=\prod_{\id{p}_2\mid2}\chi_{\id{p}_2}$.

  There are some constraints on the values of $\#Q_3$, $\#Q_5$ and
  $\#Q_7$ depending on the congruence of $d$ (or $d/4$) modulo $8$;
  they  are given in Table~\ref{table:parity}.

\begin{table}[h!]
	\begin{tabular}{|l|c|c|c||l|c|c|c|}
		\hline
		Condition & $\#Q_3$ & $\#Q_5$ & $\#Q_7$ &Condition & $\#Q_3$ & $\#Q_5$ & $\#Q_7$ \\
		\hline\hline
		$d \equiv 1 \pmod 8$ & $0$ & $0$ & $1$ &$d \equiv 5 \pmod 8$ & $0$ & $1$ & $1$\\
		& $1$ & $1$ & $0$ &  & $1$ & $0$ & $0$\\
		\hline
		$d/4 \equiv 3 \pmod 8$ & $0$ & $1$ & $0$ &$d/4 \equiv 7 \pmod 8$ & $0$& $0$ & $0$\\
		& $1$ & $0$ & $1$ &    & $1$ & $1$ & $1$\\
		\hline
		$d/4 \equiv 2 \pmod 8$ & $0$ & $0$ & $1$ & $d/4 \equiv 6 \pmod 8$ & $0$& $0$ & $0$\\
		& $0$ & $1$ & $1$ & & $0$ & $1$ & $0$\\
		& $1$ & $0$ & $0$ & & $1$ & $0$ & $1$\\
		& $1$ & $1$ & $0$ & & $1$ & $1$ & $1$ \\
		\hline
	\end{tabular}
	\caption{\label{table:parity}}
\end{table}

        Using such relations and the previous definitions, it is not
        hard to verify that in all cases
\begin{equation}
  \label{chi2}
\chi_2|_{\Z_2^\times}=\delta_{2}^{v_2(d)+1}\delta_{-1}^{\#Q_5+\#Q_7+1}.
\end{equation}
		
Extend $\chi$ to
$K^\times \cdot (\prod_{\id{q}}\Om_{\id{q}}^\times \times
(\RR^\times)^2)$ by making it trivial on $K^\times$. With these
definitions, the same proof given in \cite[Theorem 3.2]{PT} (page 14)
proves that the equality $\chi^2 = \varepsilon \circ \norm$ holds.
		
\noindent{\bf Compatibility:} the subgroup of units in $K$ is
generated by $\{-1,\epsilon\}$ hence it is enough to prove the
compatibility at both elements. Replacing $d$ by $-d$ we interchange
real quadratic fields with imaginary quadratic ones. The local part of
the character $\chi$ is invariant under such transformation for all
odd primes, but not at primes dividing $2$. For such places, the
restriction of the local character to $\Z_2^\times$ differs by
$\delta_{-1}$. In \cite[Theorem 3.2]{PT} we proved the compatibility
at $-1$ for imaginary quadratic fields $K$; since
$\delta_{-1}(-1) = -1$, the compatibility relation for real quadratic
fields at $-1$ follows from the extra sign coming from the archimedean
contribution.
		
Proving the compatibility for $\epsilon$ takes more effort.  The
character $\chi$ satisfies $\chi_{\id{p}}(\epsilon) = 1$ for all
unramified primes and for primes in $\bfP_-\cap( Q_1\cup Q_3)$ (recall
that the character $\chi_{\id{p}}$ has order $2$ at primes in $Q_1$
and is trivial at primes in $Q_3$). Its value at primes in
$\bfP_- \cap (Q_5 \cup Q_7)$ equals $-1$. Since the character
$\delta_{-2}$ also satisfies that it takes the value $-1$ at primes in
$Q_5 \cup Q_7$ and $+1$ at the other ones, we need to prove the
following identity
\begin{equation}
  \label{eq:compatibility} \chi_2(\epsilon)\cdot
  (-1)^{\#(\bfP_- \cap (Q_5 \cup
    Q_7))}=\chi_2(\epsilon)\delta_{-2}(\omega)=1,
\end{equation}
where $\omega = \prod_{p \in \bfP_{-}} p$
as before. The proof of Theorem~\ref{thm:epsilondescription} implies
that there exists $\alpha \in \Om_K$ such that $\omega=\epsilon
\overline{\alpha}^2$ or $2\omega=\epsilon \overline{\alpha}^2$. In the
first case,
\[
  \chi_2(\overline{\alpha}^2)=\chi_2^2(\overline{\alpha})=\varepsilon_2(\norm(\alpha))=\varepsilon_2(\omega).
\]
Since $\varepsilon_2$ is at most quadratic, it equals its inverse. Hence $\chi_2(\epsilon)=\chi_2(\omega)\varepsilon_2(\omega)$ and then equation
(\ref{eq:compatibility}) is equivalent to the statement
\begin{equation}
  \label{eq:comp2}
\chi_2(\omega)\varepsilon_2(\omega)\delta_{-2}(\omega) =1.
\end{equation} A key fact is that the hypothesis
$\norm(\alpha) = \omega$ imposes a constraint on its possible values.
Using equation~(\ref{chi2}), the proof follows from the following case
by case study:
		
\begin{itemize}
\item If $d \equiv 1 \pmod 8$, then $\chi_2 = \delta_{-2}$ and
  $\varepsilon_2$ is trivial hence (\ref{eq:comp2}) holds.
			
\item If $d/4 \equiv 3 \pmod 8$, the norm condition implies that $\omega$
  is congruent to $1$ or $5$ modulo $8$. By definition
  $\chi_2|_{\ZZ_2^\times}=\delta_{-2}$ and
  $\varepsilon_2=\delta_{-1}$, which is trivial on both $1, 5$ hence
  (\ref{eq:comp2}) holds.
			
\item If $d \equiv 5 \pmod 8$, by definition
  $\chi_2|_{\ZZ_2^\times}=\delta_{2}$ and $\varepsilon_2=\delta_{-1}$
  hence (\ref{eq:comp2}) holds.
			
\item If $d/4 \equiv 7 \pmod 8$, the norm condition implies that $\omega$
  is congruent to $1$ or $5$ modulo $8$. By definition
  $\chi_2|_{\ZZ_2^\times}=\delta_{2}$ and $\varepsilon_2=1$. But
  $\delta_2$ and $\delta_{-2}$ take the same values at $\{1,5\}$ hence
  (\ref{eq:comp2}) holds.
			
\item If $d/4 \equiv2\pmod8$, the norm condition implies that $\omega$ is
  congruent to $1$ or $7$ modulo $8$. By definition
  $\chi_2|_{\ZZ_2^\times}\cdot \varepsilon_2=\delta_{-1}$, which
  coincides with $\delta_{-2}$ on $\{1, 7\}$ hence (\ref{eq:comp2})
  holds.
			
\item If $d/4 \equiv6\pmod8$, the norm condition implies that $\omega$ is
  congruent to $1$ or $3$ modulo $8$. By definition
  $\chi_2|_{\ZZ_2^\times}\cdot \varepsilon_2=1$ but $\delta_{-2}$ is
  trivial on $\{1, 3\}$ hence (\ref{eq:comp2}) holds.
\end{itemize}
		
If $d$ is odd, the equality $\omega = \epsilon \bar{\alpha}^2$ always
holds hence the result follows. Assume then that $2$ ramifies in
$K/\Q$ and that $2\omega = \epsilon \bar{\alpha}^2$. Let $\id{p}_2$
denote the unique prime of $K$ dividing $2$. To ease notation, let
$\tilde{d} = d/4$. Recall that $K(\sqrt{\epsilon})$ is unramified at
$\id{p}_2$ if and only if $\epsilon$ is a square mod $4$ (see for
example \cite[Lemma 3.4]{MR3946721}). The equality
$2\omega = \epsilon \overline{\alpha}^2$ implies that
\begin{equation}
  \label{eq:tilded0}
\left(\frac{2}{\overline{\alpha}}\right)^2\omega = 2\epsilon.
\end{equation}
Note that $\frac{2}{\overline{\alpha}}$ has positive valuation at
$\id{p}_2$, hence we can reduce equality (\ref{eq:tilded0}) modulo
$16$ to compute for each possible value of $\epsilon$ the
corresponding value of $\omega$ (up to squares) via a finite
computation. Before presenting the results of the finite computation,
note the following: if $d_1 \equiv d_2 \pmod {16}$, then
$\Z[\sqrt{d_1}]/2^4 \simeq \Z[\sqrt{d_2}]/2^4$ (as rings) via the
natural map sending $\sqrt{d_1}$ to $\sqrt{d_2}$. Applying it to
equality (\ref{eq:tilded0}) proves that the value ${\omega}$ attached
to a fundamental unit of the form $a+b\sqrt{d_1}$ equals that attached
to $a+b\sqrt{d_2}$. In particular, it is enough to perform the finite
computation for $\tilde{d}$ modulo $16$.
		
If $\tilde{d} \equiv 3 \pmod 4$ and $t \mid d$ then the extension
$K(\sqrt{t})$ is ramified at $\id{p}_2$ precisely when $t$ is even
(and not divisible by $4$). Then under our hypothesis, the extension
$K(\sqrt{\epsilon})/K$ is ramified at $\id{p}_2$. Take
$\{\sqrt{d}/2,1+\sqrt{{d}},-1\}$ as generators for the group of
invertible elements modulo $16$ when $\tilde{d}\equiv3\pmod8$ and
$\{\sqrt{d}/2,1+\sqrt{{d}},5\}$ when
$\tilde{d}\equiv7\pmod8$. Consider the different cases, taking into
account once again that the condition $2\omega$ being a norm implies that
$\omega\equiv3,7\pmod8$ when $\tilde{d}\equiv3\pmod{8}$ and
$\omega\equiv1,5\pmod8$ when $\tilde{d}\equiv7\pmod{8}$. Then:
\begin{itemize}
\item If $\tilde{d} \equiv 3,7 \pmod {16}$, the possible values for
  $\epsilon$ (given as generators' exponents) and the values of
  ${\omega}$ are given in Table~\ref{table:cased3and7}.  Since
  $\chi_2((a,b,c)) = (-1)^{a+c}$ (again as exponents) the equality
  $\chi_2(\epsilon) = \delta_{-2}({\omega})$ follows recalling that
  $\delta_{-2}(1)=\delta_{-2}(3) = 1$ and
  $\delta_{-2}(5)=\delta_{-2}(7) = -1$.
  
  \begin{table}[H]
    \begin{tabular}{|c||c|c||c|c||c|c||c|c|}
    	\hline & & & & & & & &\\[-1em]
 $\tilde{d} \pmod{16}$ & Exp. & ${\omega}$ & Exp. & ${\omega}$ &
Exp. & ${\omega}$ & Exp. & ${\omega}$ \\ \hline $3$ & $(1,1,0)$ & $7$ &
$(1,1,1)$ & $3$ & $(1,3,0)$ & $ 7$ & $(1,3,1)$ & $3$\\ \hline $3$ &
$(3,1,0)$ & $7$ & $(3,1,1)$ & $3$ & $(3,3,0)$ & $7$ & $(3,3,1)$ &
$3$\\ \hline $7$ & $(1,0,0)$ & $5$ & $(1,0,1)$ & $1$ & $(1,2,0)$ & $5$
& $(1,2,1)$ & $1$\\ \hline $7$ & $(3,0,0)$ & $5$ & $(3,0,1)$ & $1$ &
$(3,2,0)$ & $5$ & $(3,2,1)$ & $1$\\ \hline
    \end{tabular}
    \caption{Relation between $\epsilon$ and ${\omega}$ for
      $\tilde{d} \equiv 3,7 \pmod{16}$ \label{table:cased3and7}}
  \end{table}
			
\item If $\tilde{d} \equiv 11,15\pmod{16}$ the possible values for
  $\epsilon$ and the values of ${\omega}$ are given in
  Table~\ref{table:cased11and15}. Since $\chi_2((a,b,c)) = (-1)^{c}$
  in this case, the equality $\chi_2(\epsilon)=\delta_{-2}({\omega})$
  holds.
  \begin{table}[h!]
  	\begin{tabular}{|c||c|c||c|c||c|c||c|c|}
  		\hline & & & & & & & &\\[-1em]
  		$\tilde{d} \pmod{16}$ & Exp. & ${\omega}$ & Exp. & ${\omega}$ &
  		Exp. & ${\omega}$ & Exp. & ${\omega}$ \\ \hline $11$ & $(1,1,0)$ & $3$ &
  		$(1,1,1)$ & $7$ & $(1,3,0)$ & $ 3$ & $(1,3,1)$ & $7$\\ \hline $11$ &
  		$(3,1,0)$ & $3$ & $(3,1,1)$ & $7$ & $(3,3,0)$ & $3$ & $(3,3,1)$ &
  		$7$\\ \hline $15$ & $(1,0,0)$ & $1$ & $(1,0,1)$ & $5$ & $(1,2,0)$ &
  		$1$ & $(1,2,1)$ & $5$\\ \hline $15$ & $(3,0,0)$ & $1$ & $(3,0,1)$ &
  		$5$ & $(3,2,0)$ & $1$ & $(3,2,1)$ & $5$\\ \hline
  	\end{tabular}
  	\caption{Relation between $\epsilon$ and ${\omega}$ for
  		$\tilde{d} \equiv 11,15 \pmod{16}$ \label{table:cased11and15}}
  \end{table}
\end{itemize}

 When $8 \mid d$, Theorem~\ref{thm:epsilondescription}
implies that the case $2\omega = \epsilon \bar{\alpha}^2$ occurs
precisely for $\epsilon \equiv -1 \pmod{\id{p}_2^3}$. Recall that
$(\Om_{\id{p}_2}/2^3)^\times$ is generated by the elements
$\{-1,5,1+\sqrt{d/4}\}$ (of order $2, 2, 8$). Using the congruence of
$\epsilon$ modulo $\id{p}_2^3$, the condition ~(\ref{eq:tilded0}) and
the fact that $2\omega$ is the norm of an element, we search for all
possible values of $\epsilon$ and $\omega$.
\begin{itemize}
\item If $\tilde{d}\equiv 2 \pmod{16}$ (respectively
  $\tilde{d}\equiv 10 \pmod{16}$) then $\#Q_3+\#Q_5$ is even
  (respectively odd). The assumption that $2\omega$ is a norm implies
  that ${\omega}\equiv1,7\pmod{8}$ (respectively
  ${\omega}\equiv3,5\pmod{8}$). All the possible values of $\epsilon$
  for each ${\omega}$ are given in Table~\ref{table:cased2} from
  which it follows (using the definition of $\chi_2$) that
  (\ref{eq:compatibility}) holds.
  \item If $\tilde{d}\equiv 6 \pmod{16}$ then $\#Q_3+\#Q_5$ is odd. The
  norm condition implies that ${\omega}\equiv5,7\pmod{8}$. The
  possible values of $\epsilon$ and ${\omega}$ are given in
  Table~(\ref{table:cased2}) from which it follows that
  (\ref{eq:compatibility}) holds.
\begin{table}[h!]
\begin{tabular}{|c||c|c||c|c||c|c||c|c|}
\hline & & & & & & & &\\[-1em]
$\tilde{d} \pmod{16}$ & $\epsilon$ & ${\omega}$ & $\epsilon$ & ${\omega}$ & $\epsilon$ & ${\omega}$ & $\epsilon$ & ${\omega}$\\
\hline
$2$ & $-1$ & $7$ & $(1+\sqrt{\tilde{d}})^2$ & $1$ & $-(1+\sqrt{\tilde{d}})^4$ & $7$ & $(1+\sqrt{\tilde{d}})^6$ & $1$\\
\hline
 $10$ & $-1$ & $3$ & $(1+\sqrt{\tilde{d}})^2$ & $5$ & $-(1+\sqrt{\tilde{d}})^4$ & $3$ & $(1+\sqrt{\tilde{d}})^6$ & $5$\\
\hline
  $6$& $-1$ & $5$ & $5(1+\sqrt{\tilde{d}})^2$ & $7$ & $-(1+\sqrt{\tilde{d}})^4$ & $5$ & $5(1+\sqrt{\tilde{d}})^6$ & $7$ \\
  \hline
    \end{tabular}
    \caption{Relation between $\epsilon$ and ${\omega}$ for
      $\tilde{d} \equiv 2, 6, 10 \pmod{16}$ \label{table:cased2}}
  \end{table}
\item If $\tilde{d}\equiv14\pmod{16}$ then $\#Q_3+\#Q_5$ is even,
  hence $\chi_2$ is trivial. The norm condition implies that
  ${\omega}\equiv1,3\pmod8$ so formula ~(\ref{eq:compatibility}) holds.
\end{itemize} Once the compatibility is verified, the proof of Theorem
3.2 in \cite{PT} works mutatis mutandis.
\end{proof}
	
\section{The conductor and Nebentypus of the extended
  representation}\label{sec:nebentypus}
	
Let $(a,b,c)$ be a primitive solution of (\ref{eq:24p}) and let $\E$
be the elliptic curve attached to it, with defining equation
(\ref{eq:freycurve}). The properties imposed on $\chi$ imply that the
twisted representation $\rho_{\E,p} \otimes \chi$ extends to a
$2$-dimensional representation of $\Gal_\Q$. 

\begin{lemma} \label{lemma:chi}
Suppose that there exists an odd prime $p$ ramifying in
  $K/\Q$. Let $\sigma \in \Gal_\Q$ and let $\delta_K$ denote the
  quadratic character corresponding to the real quadratic extension
  $K/\Q$. Then,
  \[
\chi(\sigma^2)= \varepsilon(\sigma)\delta_K(\sigma).  
    \]
\end{lemma}
\begin{proof}
  If $\sigma \in \Gal_K$, then the first property of
  Theorem~\ref{thm:charexistence} implies that
  $\chi(\sigma^2) = \chi(\sigma)^2 = \varepsilon(\sigma)$, so the
  statement is clearly true for all elements of $\Gal_K$ (since
  $\delta_K(\sigma) = 1$). Since $\Gal_K$ has index two in $\Gal_\Q$,
  it is enough to prove that the equality holds at one element of
  $\Gal_\Q \setminus \Gal_K$. Let $p$ be an odd prime ramifying in the
  extension $K/\Q$, and let $L = \Q(\zeta_p)$ be the cyclotomic
  extension. The Galois group $\Gal(L/\Q)$ is isomorphic to the cyclic
  group $(\Z/p)^\times$. Let $g$ be a generator. By class field
  theory, $\Gal(L/\Q)$ is also isomorphic to the quotient
  $\II_\Q / \norm_{L/\Q}(\II_L)$. Let $\sigma_p$ be the element of
  $\Gal(L/\Q)$ corresponding to the id\`ele $\iota_p$ with local
  coordinates:
  \[
    (\iota_p)_v=
    \begin{cases}
      g & \text{ if }v = p,\\
      1 & \text{ otherwise.}
    \end{cases}
  \]
  Denote also by $\sigma_p$ any extension to the whole Galois group
  $\Gal_\Q$ of it which is not the identity on $K$. As explained
  before, it is then enough to prove the equality at the element
  $\sigma_p$. Clearly $\sigma_p^2 \in \Gal_K$, and furthermore, it
  matches the transfer map from $\Gal_\Q ^{\text{ab}}$ to
  $\Gal_K^{\text{ab}}$ (see for example \cite[Chapter 8]{MR554237} for
  the definition of the transfer map). On the id\`ele side, the
  transfer map matches the natural map $\II_\Q \to \II_K$, so the
  element $\iota_p$ corresponds to the id\`ele $\iota_p^K$ of $\II_K$
  with local components
  \[
    (\iota_p^K)_v =
    \begin{cases}
      g & \text{ if }v = \id{p},\\
      1 & \text{ otherwise.}
      \end{cases}
    \]
    The value $\chi(\sigma_p^2)$ then equals $\chi(\iota_p^K) = \chi_{\id{p}}(g)$, and
    one of the key properties imposed on $\chi$ and $\varepsilon$ in
    \cite{PT} is that at all odd ramified primes
    $\chi_{\id{p}} = \varepsilon_p \delta_{K,p}$, via the natural
    identification of $(\Z/p)^\times$ with
    $(\Om_K/\id{p})^\times$. Hence the statement.
\end{proof}

\begin{thm} Suppose there exists a prime $q>3$ ramifying in $K$. Then
  the twisted representation $\rho_{\E,p}\otimes \chi$ descends to a
  $2$-dimensional representation of $\Gal_\Q$ attached to a newform of
  weight $2$, Nebentypus $\varepsilon$ and level $N$ given by
  \[
    N=2^e \cdot \prod_{q}q^{v_\id{q}(N({\E}))}\cdot \prod_{q\in Q_3} q\cdot
    \prod_{q \in Q_1 \cup Q_5 \cup Q_7}q^2,
  \]
  where the first product is over odd primes, and $\id{q}$ denotes a prime
  of $K$ dividing $q$. The value of $e$ is one of:
  \[
    e = \begin{cases}
      1,8 & \text{ if } 2 \text{ splits},\\
      8 & \text{ if } 2 \text{ is inert},\\
    7,8 & \text{ if } d \equiv 3 \pmod 8,\\
    5, 8 & \text{ if } d\equiv 7 \pmod 8,\\
      8, 9 & \text{ if }2 \mid d.
    \end{cases}
  \]
  \label{thm:levelandnebentypus}
\end{thm}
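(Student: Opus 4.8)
The plan is to establish the three assertions — descent and modularity, the Nebentypus, and the level — in that order, treating the Nebentypus sign and the local conductor at $2$ as the two delicate points.

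First I would record the descent and modularity. By Theorem~\ref{thm:charexistence}(3), ${}^\tau\chi = \chi\cdot\psi_{-2}$, which is exactly the condition making $\rho_{E,\ell}\otimes\chi$ isomorphic to its $\tau$-conjugate; together with the vanishing of the relevant obstruction in $H^2(\Gal(K/\Q),\overline{\Q}_\ell^\times)$ this produces a $2$-dimensional $\varrho\colon\Gal_\Q\to\GL_2(\overline{\Q}_\ell)$ restricting to $\rho_{E,\ell}\otimes\chi$ on $\Gal_K$, as already asserted at the top of the section. The Hodge--Tate weights of $\varrho$ are those of $\rho_{E,\ell}$, namely $\{0,1\}$, since $\chi$ is of finite order (being a square root of $\varepsilon$), so $\varrho$ has motivic weight $1$; and $\varrho$ is odd, as $\chic$ is odd while $\varepsilon_\infty$ is trivial. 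Irreducibility holds for all but finitely many $\ell$, so Serre's modularity conjecture (Khare--Wintenberger) — equivalently Ribet's modularity of $\Q$-curves — attaches to $\varrho$ a newform of weight $2$.

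Next, the Nebentypus. Since $\det\rho_{E,\ell}=\chic|_{\Gal_K}$ and $\chi^2=\varepsilon\circ\norm$, one has $\det\varrho|_{\Gal_K}=(\chic\,\varepsilon)|_{\Gal_K}$. Two extensions of a character of $\Gal_K$ to $\Gal_\Q$ differ by the quadratic character $\delta_d$ of $\Gal_\Q$ cutting out $K$, so $\det\varrho\in\{\varepsilon\chic,\ \varepsilon\delta_d\chic\}$ and the Nebentypus is $\varepsilon$ or $\varepsilon\delta_d$. Deciding the sign is the heart of the matter. I would pin it down by evaluating $\varrho$ on an explicit element $\tau\in\Gal_\Q\setminus\Gal_K$: the value $\varrho(\tau)$ is governed by the $\Q$-curve structure, i.e. the order-$2$ isogeny with kernel $(0,0)$ defined over $K(\sqrt{-2})$, and this can be made completely explicit on the $3$-torsion of $E_{(A,B)}$, whose Galois action is read off directly from the Weierstrass model \eqref{eq:freycurve}. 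Comparing $\det\varrho(\tau)$ with $\varepsilon(\tau)\chic(\tau)$ against $\varepsilon(\tau)\delta_d(\tau)\chic(\tau)$ fixes the sign and yields Nebentypus exactly $\varepsilon$.

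Finally, the level $N=\cond(\varrho)$, which I would compute one prime at a time. For an odd prime $q$ unramified in $K$ the inertia group lies in $\Gal_K$, and since $\chi$ is unramified away from $2\prod_{p\in Q_1\cup Q_5\cup Q_7}\id{p}$, the local conductor exponent equals that of $\rho_{E,\ell}$ at a prime $\id{q}\mid q$, namely $v_{\id{q}}(N_E)$ — well defined since the $\Q$-curve symmetry forces the reduction types at $\id{q}$ and $\overline{\id{q}}$ to agree — accounting for $\prod_q q^{v_{\id{q}}(N_E)}$. For an odd ramified prime $q\mid d$ the inertia no longer lies in $\Gal_K$; here I would use the isomorphism $\Ind_{\Gal_K}^{\Gal_\Q}\Res\,\varrho\cong\varrho\oplus(\varrho\otimes\delta_d)$ and the conductor of an induced representation to control $\cond(\varrho)\cdot\cond(\varrho\otimes\delta_d)$, then extract the individual factor from the local analysis of $\varrho|_{\Gal_{\Q_q}}$ via the tame index-$2$ subgroup $I_{\id q}$, the local component $\chi_{\id q}$ (trivial for $q\in Q_3$, quadratic for $q\in Q_1\cup Q_7$, of order $4$ for $q\in Q_5$, by Theorem~\ref{thm:charexistence}), and the reduction type of $E_{(A,B)}$ at $\id{q}$; the bookkeeping produces exponent $1$ for $q\in Q_3$ and $2$ for $q\in Q_1\cup Q_5\cup Q_7$.

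The main obstacle is the prime $2$, where every ingredient interacts. There I would combine the explicit local conductor of $\chi$ at the prime(s) above $2$ — whose exponent $a$ is recorded in Theorem~\ref{thm:charexistence} and depends on $d\bmod 8$ and $d/4\bmod16$ — with the reduction type of $E_{(A,B)}$ at $2$ and the ramification of $K/\Q$ at $2$, passing through $K_{\id{q}}/\Q_2$ when $2$ ramifies. The several admissible values of $e$ ($1,8$ when $2$ splits; $8$ when $2$ is inert; $6,7$ when $2$ ramifies with $2\nmid d$; $8,9$ when $2\mid d$) reflect the possible reduction types of the Frey curve at $2$ together with the two legitimate choices for $\chi_2$ allowed in the construction. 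Verifying that these are the only outcomes is the most calculation-heavy step, and is where I expect the real work to lie.
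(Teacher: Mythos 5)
Your architecture is right (descent, then Nebentypus as a binary choice between $\varepsilon$ and $\varepsilon\delta_K$, then a prime-by-prime conductor computation), and you correctly identify the sign of the Nebentypus as the heart of the matter. But the step you propose for deciding it --- ``evaluate $\varrho$ on an explicit element $\tau\in\Gal_\Q\setminus\Gal_K$ and compare $\det\varrho(\tau)$ with the two candidates'' --- is not yet an argument, because $\varrho$ is determined by $\rho'=\rho_{E,\ell}\otimes\chi$ only up to twist by $\delta_K$, which is precisely the ambiguity you are trying to resolve; you have given no independent handle on $\det\varrho(\tau)$. You also never explain why the hypothesis ``there exists a prime $p>3$ ramifying in $K$'' enters, and this hypothesis is the whole point of the real-quadratic case: the obvious choice $\tau=$ complex conjugation, which settles the imaginary case via Ribet's oddness result, is useless here because $\delta_K(-1)=1$ for $K$ real, so $\varepsilon$ and $\varepsilon\delta_K$ agree on complex conjugation.

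The paper's mechanism, which is absent from your sketch, is the following. Take $\sigma_p\in I_p\setminus I_{\id{p}}$ for an odd ramified $p>3$; since $E_{(A,B)}$ has good reduction at $\id{p}$, the restriction $\rho'|_{I_{\id{p}}}$ is scalar, and $\tilde\rho(\sigma_p)$ is an intertwiner between $\rho'$ and ${}^{\sigma_p}\rho'$. Schur's lemma then yields a clean dichotomy: if ${}^{\sigma_p}\rho'=\rho'$ \emph{as representations}, then $\tilde\rho(\sigma_p)$ is scalar and $\det\tilde\rho=\delta_K\varepsilon\chic$; if not, $\tilde\rho(\sigma_p)$ is (up to scalar) $\left(\begin{smallmatrix}1&0\\0&-1\end{smallmatrix}\right)$ and $\det\tilde\rho=\varepsilon\chic$. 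So the entire problem reduces to proving ${}^{\sigma_p}\rho\neq\rho\cdot\delta_{-2}$, equivalently that $\phi\neq\sigma_p$ as automorphisms of $L=K(x(E_{(A,B)}[3]))$, where $\phi$ is the explicit isogeny. This is settled not by ``reading off the Galois action on $3$-torsion from the Weierstrass model'' in the abstract, but by a concrete computation with the discriminant of the $3$-division polynomial $\psi_3$ showing that $\phi$ fixes $\sqrt{-3d}$, whence $L/L^{\phi}$ is unramified at $p$ while $\sigma_p$ is ramified there. Without this chain --- the choice of $\sigma_p$ in inertia, the good-reduction input, the Schur dichotomy, and the unramifiedness criterion distinguishing $\phi$ from $\sigma_p$ --- the sign of the Nebentypus is not determined, so I regard this as a genuine gap rather than a different route. (Your treatment of the descent and of the level away from the Nebentypus is consistent with the paper, which defers those points to its predecessor.)
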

	
\begin{proof} The extension result is well known although a proof was
  recalled in \cite[Theorem 4.2]{PT}. To ease notation let
  $\rho'_p=\rho_{\E,p}\otimes \chi$ and $\tilde{\rho}_p$ denote its
  extension to $\Gal_\Q$. The Nebentypus assertion was only proved
  under the hypothesis that $K/\Q$ is imaginary quadratic. The reason
  is the following: we know that $\rho'_p$ has determinant the
  cyclotomic character (denoted $\chic$) times $\varepsilon$ (by Theorem~\ref{thm:charexistence}), hence
  the determinant of $\tilde{\rho}_p$ equals $\varepsilon \chic$ or
  $\varepsilon \delta_K \chic$ (where $\delta_K$ denotes the quadratic
  character corresponding to the extension $K/\Q$). But Ribet's result
  (see \cite[Theorem 4.4]{MR2058653}) implies that the determinant of $\tilde{\rho}_p$
  is odd hence the statement. When $K/\Q$ is real both characters
  take the same value at complex conjugation!  How can we distinguish
  which one is the Nebentypus of the representation
  $\tilde{\rho}_p$ when our extension is real?  The
  solution is to work with another element of an inertia subgroup of
  $K/\Q$.

  Fix a basis for the Tate module of the elliptic curve $\E$ (so we
  can assume that the image of our representation lies in
  $\GL_2(\Q_p)$). Since our field $K$ is real quadratic, we know
  that the Galois representation $\rho_{\E,p}$ is absolutely
  irreducible. In particular, any matrix commuting with its image must
  be a scalar matrix by Schur's Lemma.

  Let $S$ denote the set of primes ramifying in $K/\Q$, and for each
  odd prime $q \in S$ let $\id{q}$ denote the prime of $K$ dividing
  it. Fix one odd prime $q>3$ in $S$ different from  $p$.
  Let $I_q \subset \Gal_\Q$ denote an inertia subgroup at $q$ and
  $I_{\id{q}}$ its index two subgroup.  By \cite[Lemma 2.5]{PT} the
  curve $\E$ has good reduction at $\id{q}$ hence (by the
  N\'eron-Ogg-Shafarevich criterion) $\rho'_p|_{I_{\id{q}}}$ is a
  scalar matrix. Let $\sigma_q \in I_q \setminus I_{\id{q}}$ and let
  $^{\sigma_q}\rho'_p(\tau):=\rho'_p(\sigma_q\tau
  \sigma_q^{-1})$. The character $\chi$ was constructed so that
  $^{\sigma_q}\rho'_p \simeq \rho'_p$, hence both
  representations are conjugate under a matrix of
  $\GL_2(\Q_p)$. Since $\tilde{\rho}_p$ extends $\rho'_p$,
  $\tilde{\rho}_p(\sigma_q)$ is such a matrix. Consider the following
  two different cases:
\begin{itemize}
\item If $^{\sigma_q}\rho'_p= \rho'_p$, then
  $\tilde{\rho}_p(\sigma_q)$ is a scalar matrix (by Schur's
    Lemma), say
  $\left(\begin{smallmatrix} \lambda & 0\\ 0 &
      \lambda\end{smallmatrix}\right)$. In particular,
  $\det(\tilde{\rho}_p(\sigma_q)) = \lambda^2$. On the other hand,
  $\tilde{\rho}_p(\sigma_q)^2 = \rho'_p(\sigma_q^2) =
  \left(\begin{smallmatrix}\chi(\sigma_q^2) & 0\\ 0 &
      \chi(\sigma_q^2)\end{smallmatrix}\right)$ hence in particular
  $\lambda^2 = \chi(\sigma_q^2) =
  \varepsilon(\sigma_q)\delta_K(\sigma_q)$ from Lemma~\ref{lemma:chi},
  so $\det(\tilde{\rho}_p) = \varepsilon \delta_K\chic$.
			
\item If $^{\sigma_q}\rho'_p\neq \rho'_p$,
  $\tilde{\rho}_p(\sigma_q)^2 = \rho'_p(\sigma_q^2)$ is a scalar
  matrix. Then we can chose another basis of the Tate module so
    that the matrix $\tilde{\rho}_p(\sigma_q)$ equals the matrix
    $\left(\begin{smallmatrix}\lambda & 0\\ 0 &
        -\lambda \end{smallmatrix} \right)$. Then
    $\det(\tilde{\rho}_p(\sigma_q)) = -\lambda^2$. Once again,
    $\tilde{\rho}_p(\sigma_q)^2 = \rho'_p(\sigma_q^2) =
    \left(\begin{smallmatrix}\chi(\sigma_q^2) & 0\\ 0 &
        \chi(\sigma_q^2)\end{smallmatrix}\right)$ hence in particular
    Lemma~\ref{lemma:chi} (and the fact that $\delta_K(\sigma_q)=-1$) implies that
    $\det(\tilde{\rho}_p(\sigma_q)) = -\lambda^2 = -\chi(\sigma_q^2)=
    -\varepsilon(\sigma_q)\delta_K(\sigma_q) = \varepsilon(\sigma_q)$ so 
  $\det(\tilde{\rho}_p) = \varepsilon \chic$.
\end{itemize}
Then we are left to prove that $^{\sigma_q}\rho'_p \neq \rho'_p$ (a
result independent of the prime $q \in S$). Recall that
$\rho'_p= \rho_{\E,p} \otimes \chi$, hence the statement is equivalent to
prove that
$^{\sigma_q} \rho_{\E,p} \neq \rho_{\E,p} \cdot \delta_{-2}$ (since
$^{\sigma_q}\chi = \chi \delta_{-2}$). Consider both actions for $\tau \in \Gal_K$ on
points of $\E$ of order $p^n$: the left hand side equals
$\sigma_q \cdot \tau \cdot \sigma_q^{-1}(P)$, while the right hand side
equals $\delta_{-2}(\tau)\tau(P)$.

Consider the $2$-isogeny $\phi : \E \to \overline{\E}$ explicitly
given by
\[
  \phi(x,y) = (\phi_1(x,y),\phi_2(x,y))=
  \left(\frac{-y^2}{2x^2},\frac{y(2a^2+2\sqrt{d}b-x^2)}{2\sqrt{-2}x^2}\right).
\]
Note in particular that
\begin{equation}
  \label{eq:isogenydefinition}
  \delta_{-2}(\tau)\cdot \tau \circ \phi = \phi \circ \tau \text{ for all }\tau \in \Gal_K,
\end{equation}
where we consider $\delta_{-2}(\tau)$ as an
endomorphism of $\E$. The hypothesis on $p$ being odd implies that for all positive integers $n$, the map $\phi : \E[p^n] \to \overline{\E}[p^n]$ is bijective. Then if $P \in \E[p^n]$, we have
\[
\sigma_q\cdot \tau \cdot\sigma_q^{-1}(P)= (\sigma_q \cdot \phi^{-1})(\phi\tau \phi^{-1})(\sigma_q\cdot \phi^{-1})^{-1}(P) = \delta_{-2}(\tau) (\sigma_q\cdot \phi^{-1})\tau (\sigma_q \cdot \phi^{-1})^{-1}(P),
\]
where the last equality follows
from~(\ref{eq:isogenydefinition}). Take $n$ large enough so that the
representation on $p^n$-torsion points (that we denote $\rho_n$) is
absolutely irreducible.
Then by Schur's Lemma, $^{\sigma_q}\rho_n = \rho_n \cdot \delta_{-2}$
if and only if the endomorphism $\sigma_q\phi^{-1}$ acts as a scalar
matrix on $\E[p^n]$. Since the Galois representation of an elliptic
curve is a part of a compatible family (and the Nebentypus does not
depend on the choice of the prime $p$), it is enough to consider the case
$p=3$ and prove that $\sigma_q\phi^{-1}$ acting on the $3$-torsion
points is not equal to multiplication by $\pm 1$ (then it cannot act
as multiplication by an integer on points of order $3^n$).
  
%
%
Note that $-1$ acts trivially on the $x$-coordinates of torsion
points, hence it is enough to prove that on the $x$-coordinate of the
$3$-torsion points, the elements $\sigma_q$ and $\phi$ do not
coincide. Let $M = K(x(\E[3]))$ denote the extension of $K$ obtained
by adding to $K$ the $x$-coordinates of all points in $\E[3]$ (a
degree $2$ subextension of $K(\E[3])$). Note on the one hand that
$\phi$ maps $x$-coordinates of $3$-torsion points of $\E$ to $x$-coordinates of $3$-torsion points of $\overline{\E}$, but also, the
map $\phi_1$ is given by a polynomial in $x$ with coordinates in
$K$. More concretely,
\begin{equation}
  \label{eq:phi1defi}
\phi_1(x) = -\frac{x^3+4ax^2+2(a^2+\sqrt{d}b)x}{2x^2}.  
\end{equation}
    This implies that $M$ is a Galois extension of $\Q$. Clearly, both $K$ and
    $\Q(\sqrt{-3})$ are subfields of $M$ (since the determinant of our
    representation is the cyclotomic character modulo $3$). In
    particular, $\Q(\sqrt{-3d})$ is contained in $M$.  Since the
    ramification degree of $q$ in $M/\Q$ is two (because $\E$ has
    good reduction at the prime dividing $q$), it must be the case
    that $\Q(\sqrt{-3}) \subset M^{\sigma_q}$ (since $\sigma_q$ cannot
    fix $\sqrt{d}$ nor $\sqrt{-3d}$). 

    For a generic curve $y^2 = x^3+\alpha x^2 + \beta x$, its
    $3$-division polynomial (whose roots generate the extension $M/K$)
    is given by
\begin{equation}
  \label{eq:divpol} \psi_3(x)=
  3x^4+4\alpha x^3+6\beta x^2-\beta^2.
\end{equation}
(recall that in our case, $\alpha = 4a$ while $\beta = 2(a^2+\sqrt{d}b)$).
Let $\theta_1, \ldots, \theta_4$ be the roots of $\psi_3$ and let
$\bar{\beta} = \frac{\alpha^2-4\beta}{4}$ (in our case $\bar{\beta}$ matches the
conjugate of $\beta$). Then
\begin{equation}
  \label{eq:disc} \frac{\Disc(\psi_3)}{2^{12}\cdot
    3^2\cdot \beta^4 \cdot \bar{\beta}^2}=\left(\frac{\prod_{i < j}(\theta_i -
      \theta_j)}{2^6\cdot 3 \cdot \beta^2 \cdot \bar{\beta}}\right)^2 = -3.
\end{equation}
In particular, since $\sigma_q$ fixes $\sqrt{-3}$, it must fix the
quotient
$\frac{\prod_{i < j}(\theta_i - \theta_j)}{2^6\cdot 3 \cdot \beta^2
  \cdot \bar{\beta}}$, and since $\sigma_q$ is not the identity in
$K$, it must send $\beta$ to $\bar{\beta}$ and vice-versa. In particular,
\[
\sigma_p\left(\prod_{i<j}(\theta_i - \theta_j)\right)=\prod_{i<j}(\sigma_p(\theta_i) - \sigma_p(\theta_j)) = \prod_{i<j}(\theta_i - \theta_j) \cdot \frac{\bar{\beta}}{\beta}.
  \]
On the other hand, for $i \neq j$, using~(\ref{eq:phi1defi}) we get
\[
  \phi_1(\theta_i) - \phi_1(\theta_j) = -\frac{\theta_i^2+\alpha \theta_i + \beta}{2\theta_i} + \frac{\theta_j^2+\alpha \theta_j + \beta}{2\theta_j} = (-1)(\theta_i-\theta_j)\frac{(\theta_i \theta_j-\beta)}{2\theta_i\theta_j}.
  \]
  It is not hard to verify that if $\{\theta_1,\ldots ,\theta_4\}$ are
  roots of a monic polynomial $x^4+A_1x^3+A_2x^2+A_3x+A_4$, then
\[
  \prod_{i<j}(\theta_i \theta_j -\beta) =\beta^6-A_2\beta^5+(A_1A_3-A_4)\beta^4+(2A_4A_2-A_4A_1^2-A_3^2)\beta^3 + (A_4A_3A_1-A_4^2)\beta^2 -A_4^2A_2\beta + A_4^3.
\]
Using this formula for $\psi_3$, we obtain
\[
  \prod_{i<j}(\theta_i \theta_j -\beta)= \frac{16\beta^5}{27}(\alpha^2-4\beta) = \frac{64 \beta^5 \bar{\beta}}{27}.
\]
Then
\[
\prod_{i < j} (\phi_1(\theta_i)-\phi_1(\theta_j)) = (-1)^6\prod_{i<j} (\theta_i -\theta_j) \left(\frac{-\bar{\beta}}{\beta}\right).
  \]
In particular, the action of $\phi_1$ and $\sigma_q$ do not match in the roots $\theta_i$ so  the claim follows.
\end{proof}
	
\begin{remark} \label{rem:sqrt6} The same result holds for
  $K = \Q(\sqrt{3})$ or $\Q(\sqrt{6})$ replacing the $3$-torsion
  points computation with the $5$-torsion ones (for the prime
  $q=3 \in S$). While working with $5$-torsion points, formula
  (\ref{eq:disc}) becomes
  \[
    \frac{\Delta(\psi_5)}{2^{88}\cdot 5^{10}\cdot
b^{44}\cdot (a^2-4b)^{22}} = 5.
\]
The case $K=\Q(\sqrt{2})$ is more subtle as there
is no clear choice of an order two element in the Galois group
$\Gal(K(\E[p])/\Q)$. In particular computed examples the result
holds (but we do not have a general proof).
\end{remark}

\section{Ellenberg's result}\label{section:Ellenberg}
Let $K/\Q$ be a quadratic extension, and let $E/K$ be a $\Q$-curve
$2$-isogenous to its Galois conjugate with a prime $\ell >3$ of
potentially multiplicative reduction. Then following ideas of
Darmon-Merel, Ellenberg proved (in Propositions 3.2, 3.4, 3.14 and
Section 4 of \cite{MR2075481}) that the projective modulo $p$
representation of $E$ is surjective if either:
\begin{itemize}
\item there exists $f \in S_2(2p^2)$ such that $w_p f= f$ and
  $w_2 f = -f$, or
\item there exists $f \in S_2(p^2)$ such that $w_p f = f$,
\end{itemize}
with $L(f \otimes \delta_K,1) \neq 0$. Recall here that if
$f = \sum_n a_n q^n$ is a modular form and $\psi$ is a Dirichlet character,
then $f \otimes \psi$ denotes the newform attached to the modular form
$\sum_n a_n \psi(n)q^n$.

An important result of
Ellenberg (see \cite[Proposition 3.9]{MR2075481}) proves that if $K$ is an
imaginary quadratic field then there is always a modular form
satisfying the second hypothesis for $p$ large enough.
	
\begin{prop}
  \label{prop:2inert}
  If $K/\Q$ is a real quadratic field in which $p$ is unramified,
  then there does not exist a newform satisfying any of the two previous
  conditions unless $2$ splits in $K/\Q$.
\end{prop}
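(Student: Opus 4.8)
The plan is to recast the two conditions as a statement about the sign of the functional equation of $L(f\otimes\delta_K,s)$, and to show that this sign is $-1$ (forcing $L(f\otimes\delta_K,1)=0$, so that Ellenberg's nonvanishing hypothesis cannot hold) in every case except condition two with $2$ split. Since $f$ has weight $2$ and trivial nebentypus, its root number factors through the Atkin--Lehner eigenvalues as $\epsilon(f)=-\prod_{q\mid N}\lambda_q$, where $\lambda_q$ is the eigenvalue of $w_q$; the archimedean factor $i^2=-1$ accounts for the overall sign. For the first condition $N=p^2$ and $\lambda_p=+1$, so $\epsilon(f)=-1$, whereas for the second condition $N=2p^2$ with $\lambda_2=-1$, $\lambda_p=+1$ gives $\epsilon(f)=+1$. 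Because $p$ is unramified in $K$ we have $p\nmid\operatorname{cond}(\delta_K)$, and because $K$ is real the character $\delta_K$ is even, i.e.\ $\delta_K(-1)=+1$; these two facts will be used repeatedly.

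First I would dispose of all the cases in which $\operatorname{cond}(\delta_K)$ is coprime to $N$, which happens always for the first condition (since $N=p^2$ is coprime to $d$) and for the second condition whenever $2$ is unramified in $K$ (so $d$ is odd). Here the classical Atkin--Li twisting formula applies: for a real primitive character $\delta_K$ coprime to $N$ one has $\epsilon(f\otimes\delta_K)=\delta_K(-N)\,\epsilon(f)$. For the first condition $\delta_K(-N)=\delta_K(-1)\delta_K(p)^2=1$, so $\epsilon(f\otimes\delta_K)=\epsilon(f)=-1$ unconditionally; the first condition can therefore never produce a nonvanishing twisted central value over a real field. For the second condition with $d$ odd, $\delta_K(-N)=\delta_K(-1)\delta_K(2)\delta_K(p)^2=\delta_K(2)$, whence $\epsilon(f\otimes\delta_K)=\delta_K(2)\cdot(+1)=\delta_K(2)$. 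This equals $+1$ exactly when $\delta_K(2)=+1$, i.e.\ when $2$ splits, and equals $-1$ when $2$ is inert; so the inert case is settled.

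The remaining and genuinely harder case is the second condition when $2$ ramifies in $K$ (so $d$ is even and $2\mid\operatorname{cond}(\delta_K)$), where the twisting formula no longer applies because $\delta_K$ and $f$ are both ramified at $2$. Here I would pass to local root numbers, writing $\epsilon(f\otimes\delta_K)=\epsilon_\infty\cdot\epsilon_2\cdot\epsilon_p\cdot\prod_{q\mid d,\ q\text{ odd}}\epsilon_q$. The archimedean factor is $-1$; at $p$ the representation has conductor $p^2$ and $\delta_K$ is unramified, so the local root number is scaled by $\delta_K(p)^2=1$ and is unchanged; at odd $q\mid d$ the local factors assemble exactly into the contributions already captured by the coprime analysis. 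The entire problem thus concentrates at $q=2$: there $\pi_2$ is an unramified twist of Steinberg of conductor $2$ (the eigenvalue $w_2f=-f$ pinning it down), and $\delta_{K,2}$ is a ramified quadratic character of $\Q_2$ of conductor $4$ or $8$. \textbf{The main obstacle} is to evaluate $\epsilon_2\bigl(\pi_2\otimes\delta_{K,2}\bigr)$ in these cases, using the standard formulas for the local root number of a (twisted) Steinberg representation tensored with a ramified quadratic character, and to check that the resulting total sign is $-1$. I expect a short case analysis according to the residue of $d/4$ modulo $16$ (exactly the congruence classes already appearing in Theorem~\ref{thm:epsilondescription} and in the proof of Theorem~\ref{thm:charexistence}) to show that $\epsilon_2$ always forces $\epsilon(f\otimes\delta_K)=-1$ when $2$ ramifies, completing the proof that no admissible newform exists unless $2$ splits.
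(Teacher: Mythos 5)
Your treatment of the first condition, and of the second condition when $2$ is unramified in $K$, coincides with the paper's: everything reduces to the twisting formula $\epsilon(f\otimes\delta_K)=\epsilon(f)\,\delta_K(-N)$ for a character of conductor prime to $N$, together with $\delta_K(-1)=1$ and the identification $\epsilon(f)=-\prod_q\lambda_q$. Those parts are complete and correct.

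The problem is the case you yourself label ``the main obstacle'': the second condition with $2$ ramified in $K$. You reduce it to evaluating the local root number $\epsilon_2(\pi_2\otimes\delta_{K,2})$ for a twist of Steinberg by a ramified quadratic character of $\Q_2$, and then write that you \emph{expect} a case analysis to show the total sign is $-1$. That evaluation is precisely the content of this case, and it is not carried out; as written the argument is incomplete exactly where the proposition says something beyond the coprime-conductor formula. (A minor further point: the relevant invariant is only which of the three ramified quadratic characters of $\Q_2$ the local component $\delta_{K,2}$ is, i.e.\ whether $-4$, $8$ or $-8$ divides the discriminant, so a case split on $d/4$ modulo $16$ is finer than needed.) The paper closes this case without local epsilon factors: it factors $\delta_K=\delta_{d_1}\delta_{d_2}$ with $d_1\in\{-4,\pm8\}$ and $d_2$ an odd fundamental discriminant, invokes Atkin--Lehner's Theorem 7 to record how the eigenvalue of $w_2$ changes under twisting by $\delta_{d_1}$ (namely $w_2(f\otimes\delta_{-4})=w_2(f\otimes\delta_{-8})=-1$ while $w_2(f\otimes\delta_{8})=1$, with $w_p$ unchanged), and then applies the coprime formula to the odd part $\delta_{d_2}$; the sign $-1$ emerges because either $d_2<0$, forcing $\delta_{d_2}(-1)=-1$, or the $w_2$-eigenvalue has flipped. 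To complete your version you would need to actually compute $\epsilon_2$ for each of the three ramified quadratic characters (conductors $4$ and $8$) against the unramified twist of Steinberg singled out by $w_2f=-f$, or else substitute the paper's global Atkin--Lehner computation for that local step.
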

\begin{proof} For a newform $f$, let $\epsilon(f)$ denote its root
  number (i.e. the sign of the functional equation). Recall from
  \cite[\S I.5]{MR1431508} that if $f \in S_2(N)$ is a
  newform and $\psi$ is a Dirichlet character whose conductor is prime
  to $N$ then $\epsilon(f \otimes \psi) = \epsilon(f)
  \psi(-N)$. Suppose that $f \in S_2(p^2)$ satisfies that
  $w_pf = f$, so its root number equals $-1$ (recall that the root number equals minus the sign of the canonical involution). Then if $p$ is
  unramified in $K/\Q$, the twisted form $f \otimes \delta_K$ has also
  root number $-1$ (since $\delta_K(-p^2) = 1$ for $K$ real quadratic),
  so $L(f\otimes \delta_K,1)=0$.
		
Suppose that $f$ is a newform of level $2p^2$. The Atkin-Lehner
eigenvalues hypotheses imply that $\epsilon(f)=1$. Suppose that $2$ is
unramified in $K/\Q$, hence
$\epsilon(f \otimes \delta_K) = \delta_K(-2p^2) = \delta_K(2) = 1$ if
and only if $2$ splits in $K/\Q$. When $2$ ramifies in $K/\Q$, we can
write $d_K = d_1 \cdot d_2$, where $d_1 \in \{-4, \pm 8\}$ and $d_2$
is an odd fundamental discriminant. Suppose $d_1 = -4$; writing
$f \otimes \delta_K = (f \otimes \delta_{d_1})\otimes \delta_{d_2}$,
it is enough to understand the sign change for the first twist (the
form $f \otimes \delta_{-4}$ being a form of level $16p^2$). By a
result of Atkin-Lehner (see \cite[Theorem 7]{MR268123})
$w_2(f \otimes \delta_{-4})=-1$ while
$w_p(f \otimes \delta_{-4})= w_p(f)$, hence
$\epsilon(f \otimes \delta_{-4}) = \epsilon(f) = 1$ and since $d_2$ is
negative (hence $\delta_{d_2}(-1) = -1$)
$\epsilon(f \otimes \delta_K)=-1$. A similar computation (using that
$w_2(f \otimes \delta_8) = 1$ and $w_2(f \otimes \delta_{-8})=-1$)
proves the remaining cases.
\end{proof}
	
Suppose then that $2$ splits in $K/\Q$. Ellenberg's proof of the
existence of a newform with prescribed properties consists on bounding
an average of twisted central values in the whole space of level $p^2$
modular forms (since the forms with the wrong Atkin-Lehner involution
sign in such space have zero central value).  While considering the
space $S_2(2p^2)^{\text{new}}$ the computations are harder,
as one needs to compute an average not over the whole space, but over
the subspace with a chosen Atkin-Lehner sign at $p$ (therefore
imposing also a condition to the Atkin-Lehner sign at $2$). Such
computation was carried out in \cite{LeFourn} (see the proof of
Corollary 4). Unfortunately, explicit constants are not presented in
Le Fourn's article, hence we need to add some (minor) extra details to
its proof (we suggest the reader to have a copy of such article in
hand for the rest of this section as we follow its notations and
definitions; specially Section 6).
	
The inequality $J_1(x) \le \frac{|x|}{2}$ and
$|S(1,n;c)|< \sqrt{c}\tau(c)$ (used in Ellenberg's article) turns
inequality $(6.3)$ of \cite{LeFourn} into
\begin{equation}
  \label{eq:1} |A_{N,Q,c}(x)| \le \frac{\pi}{3} \cdot
  \frac{xe^{-2\pi/x}\tau(c)}{Qc^{3/2}},
\end{equation} for $x \ge 71$ (using that
$(1-e^{-2\pi/x})^{-1} \le \frac{x}{6}$ when $x \ge 71$). The same
bound for $J_1$ gives the explicit inequality for equation $(6.4)$
\begin{equation}
  \label{eq:2} |A_{N,Q,c}(x)| \le
  \frac{12}{\pi}\frac{(\log(Dc)+1)\sqrt{D}}{cQ} e^{-2\pi/x}.
\end{equation} To get a bound for $A_{N,Q}(x)=2\pi \sum_{c>0,
  (N/Q)\mid c, (c,Q)=1}A_{N,Q,c}(x)$ we split the sum as in
\cite{LeFourn}. Suppose that $N \neq Q$, so in the following sum there is no term for $c=D$:
\[
  \frac{|A_{N,Q}(x)|}{2\pi} \le
\frac{12}{\pi}\frac{\sqrt{D}e^{-2\pi/x}}{Q} \sum_{\stackrel{c <
x^2}{(N/Q)\mid c}} \frac{(\log(Dc)+1)}{c}+ \frac{\pi}{3}
\sum_{\stackrel{ c > x^2}{(N/Q)\mid c}} \frac{x
e^{-2\pi/x}\tau(c)}{Qc^{3/2}}.
\]
For the first inner sum, writing $c = (N/Q)b$, we get the
inequality
\begin{multline}
  \label{eq:3}
  \sum_{\stackrel{c < x^2}{(N/Q)\mid c}}
  \frac{(\log(Dc)+1)}{c} =
 \frac{Q}{N}\left(\left(1+\log(\frac{DN}{Q})\right)\sum_{b=1}^{\frac{x^2Q}{N}}\frac{1}{b} + \sum_{b=1}^{\frac{x^2Q}{N}}\frac{\log(b)}{b}\right) \le \\
\frac{Q}{N}\left(\left(1+\log(\frac{DN}{Q})\right)\left(1+\log(\frac{x^2N}{Q})\right) +
\frac{\log^2(\frac{x^2N}{Q})}{2}\right),
\end{multline}
where the last inequality comes from the usual comparison between the
series and the integral. To bound the sum
$\sum_{c> X^2}\frac{\tau(c)}{c^{3/2}}$, recall the following
inequalities:
\begin{enumerate}
\item For all real $s > 1$, $\sum_{n \ge X}\frac{1}{n^s} \le -\frac{X^{1-s}}{1-s} +
  \frac{X^{-s}}{2}$ (see for example \cite[Lemma 3.1]{2003.05887}),
  
\item For $X>1$ a real number, $\sum_{d \le X}\frac{1}{d} \le \log(X) +
\gamma + \frac{7}{12X}$ where $\gamma$ is the Euler-Mascheroni constant, $\gamma
\le 0.58$ (see equation (3.1) of \cite{MR937932}).
\end{enumerate}
Then, if $s>1$,
\begin{multline*}
  \sum_{n\ge X} \frac{\tau(n)}{n^s} = \sum_{n \ge X} \left(\sum_{d|n}
    \frac{1}{n^s}\right) = \sum_d \frac{1}{d^s} \sum_{m \ge X/d}
  \frac{1}{m^s} \le \zeta(s)\sum_{d>X}\frac{1}{d^s} + \sum_{d \le X}
  \frac{1}{d^s}\left(-\frac{(X/d)^{1-s}}{(1-s)} +
    \frac{(X/d)^{-s}}{2}\right) \le \\ \zeta(s)\left(
    -\frac{X^{1-s}}{(1-s)} + \frac{X^{-s}}{2}\right) -
  \frac{X^{1-s}}{(1-s)} \sum_{d\le X} \frac{1}{d} + \frac{X^{1-s}}{2}
  \le \\ \zeta(s)\left( -\frac{X^{1-s}}{(1-s)} +
    \frac{X^{-s}}{2}\right) - \frac{X^{1-s}}{(1-s)} (\log(X)+\gamma
  +\frac{7}{12X}) + \frac{X^{1-s}}{2}.
\end{multline*}
Substituting at $s=3/2$, $X$ by $X^2$ and assuming $X \ge 32$, we
obtain
\begin{equation}
		\label{eq:secondbound} \sum_{n \ge
X^2}\frac{\tau(n)}{n^{3/2}} \le \frac{6 \log(X)}{X}.
\end{equation}
Using both inequalities, we get (for $N \neq Q$) 
\begin{multline}
  \frac{|A_{N,Q}(x)|}{2\pi} \le
  \frac{12\sqrt{D}e^{-2\pi/x}}{N\pi}\left(\left(\log(\frac{DN}{Q})+1\right)\left(1+\log(\frac{x^2N}{Q})\right)
    + \frac{\log^2(\frac{x^2N}{Q})}{2}\right) +\\ +\frac{2\pi}{N}
  \sqrt{Q/N}\tau(N/Q)\log(x)e^{-2\pi/x}.
\label{eq:boundA}
\end{multline}
When $N = Q$, there is an extra term $\frac{\pi}{3} \frac{x e^{\frac{-2\pi}{x}}\tau(D)}{ND^{3/2}}$ corresponding to the value $c=D$.
Using the fact that $B_{N,Q}(x) = A_{N,Q}(D^2N/x)$, we get the bound
\begin{equation}
  \label{eq:5}
  \frac{|B_{N,Q}(x)|}{2\pi} \le \frac{|A_{N,Q}(D^2N/x)|}{2\pi}+
  \delta_{Q = N}\frac{\pi}{3}\frac{\sqrt{D}}{x}\tau(D)e^{\frac{-2\pi
      x}{ND^2}}.
\end{equation}
Recall that
$(a_1,L_\chi)_{2p^2}^{+_{p^2},\text{new}}=(a_1,L_\chi)_{2p^2}^{+_{p^2}}
- \frac{1}{p-1}(a_1,L_\chi)_{2p}^{\chi(p)_p}$ (see \cite[Lemma
4.1]{LeFourn}), hence formulas~$(6.1), (6.2)$ of \cite{LeFourn} give
	
\begin{multline}
\frac{1}{2\pi}(a_1,L_\chi)_{2p^2}^{+_{p^2},\text{new}} \ge
\frac{(p-2)}{(p-1)}e^{-2\pi/x} - \left(|A_{2p^2,1}(x)| + |A_{2p^2,p^2}(x)|
+ \frac{|A_{2p,1}(x)|}{p-1} + \frac{|A_{2p,p}(x)|}{p-1}+\right .\\ \left. +|B_{2p^2,2p^2}(x)| +
|B_{2p^2,2}(x)| + \frac{|B_{2p,2p}(x)|}{p-1} + \frac{|B_{2p,2}(x)|}{p-1}\right).
\label{eq:final}
\end{multline}
Taking $x$ of the same magnitude of $p$ (in our applications we will
take $x = p\cdot \kappa$ for a numerical computed constant $\kappa$),
the right hand side is an increasing function of $p$, hence as soon as
we find a positive value for it, we get an explicit bound.
	
\section{Examples}
\label{sec:examples}
In this section, instead of working with fundamental discriminants, we
take values of $d$ which are square-free. We applied the method to
study solutions of (\ref{eq:24p}) for square-free values
$1 \le d \le 20$ and $d=129$. The field $\Q(\sqrt{6})$ is the first
one where the fundamental unit has norm $1$ and also contains a
non-trivial solution for all primes $p$. The case $d=129$ is the first
field where $2$ splits (so Ellenberg's result can be applied) and also
where all newforms could be discarded using Mazur's trick. For
$d\in\{3,5,7,14\}$ there are modular forms without complex
multiplication that cannot be discarded with the aforementioned
strategy (so the modular method fails). For the other square-free
values of $d$, the modular method does give a positive answer but only
for primes $p>M$ (an explicit constant) with a prescribed congruence
condition. A summary of the results is presented in
Table~\ref{table:examplesd}. The table contains also the dimension of
the weight two newform space (computed to discard possible solutions)
as well as the dimension of the Hilbert parallel weight $2$ modular
forms space (if one would follow the classical modular approach over
$K$). Note the dimension of the Hilbert space becomes almost
infeasible from a computational point of view very soon.

\begin{table}[h!]
  \begin{tabular}{|c||c|c|c|c|c|}
    \hline
    $d$ & Theorem & $M$ & Condition on $p$ & $\dim(S_2(N,\varepsilon))$ & Hilbert space\\
    \hline
    $6$ & \ref{thm:d=6} & $19$ & $p\neq 97$; $p\equiv1,3\pmod8$ & $28$, $64$ & $96 $, $384$ \\
    \hline
    $10$ & \ref{thm:d=10} &$19$ & $p\neq 139$; $p\equiv1,3\pmod8$ & $140, 288$
                                                                    & $448$, $1792$ \\
    \hline
    $11$ & \ref{thm:d=11} & $19$ & $p\neq73$;
                                   $p\equiv1,3\pmod8$ & $48, 92$ & $224$, $896$ \\
    \hline
    $19$&  \ref{thm:d=19} & $19$ & $p\neq 43,113$; $p\equiv1,3\pmod8$ & $80, 156$ & $608$, $2432$
    \\ \hline
    $129$ & \ref{thm:cased=129} & $19$ & $p>900$ or $p\equiv 1,3\pmod 8$ and
    $p\neq43$ & $16$, $1400$ & $100$, $600$, $38400$\\ \hline
  \end{tabular}
  \caption{\label{table:examplesd}}
\end{table}
	\subsection{The case $d=6$} As mentioned before, although the
case $d=6$ seems to be out of reach of the modular method, it turns
out that the Frey curve attached to the solution $(\pm 7, \pm 20, 1)$
does also have complex multiplication! (this seems like a very
fortunate coincidence, unlikely to occur for other values). The
trivial solution gives an elliptic curve with $j$-invariant $8000$
(with complex multiplication by $\Z[\sqrt{-2}]$). Over $\Q(\sqrt{6})$ there are only two
extra isomorphism classes of elliptic curves with complex multiplication whose
$j$-invariant is not rational (see \cite{MR3373247}), with
$j$-invariants $188837384000\pm77092288000\sqrt{6}$. The Frey curves
$E_{(\pm 7, \pm 20, 1)}$ have precisely such $j$-invariants!
\begin{thm}
  \label{thm:d=6}
  Let $p>19$ be a prime number such that $p\neq 97$ and
  $p\equiv1,3\pmod8$. Then, $(\pm 7,\pm20,1)$ are the only non-trivial
  primitive solutions of the equation
  \[
    x^4-6y^2=z^p.
  \]
\end{thm}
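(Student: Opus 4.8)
The plan is to run the modular method described at the start of Section~\ref{sec:examples}, but to leverage the remarkable coincidence that the Frey curve attached to the known solution $(\pm7,\pm20,1)$ has complex multiplication. First I would attach to a hypothetical non-trivial solution $(A,B,C)$ (with $C\neq\pm1$) the Frey curve $E_{(A,B)}$ over $K=\Q(\sqrt{6})$. Since $6\equiv 6 \pmod 8$ (so $d/4$ is not an integer; here $d=6$ is square-free and $2$ ramifies in $K/\Q$), I would compute the precise level $N$ and Nebentypus $\varepsilon$ using Theorem~\ref{thm:levelandnebentypus} and the recipe for $\varepsilon$ and $N_\varepsilon$ given before Theorem~\ref{thm:charexistence}. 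For $d=6$ we have $Q_3=Q_5=Q_7=\emptyset$ (the only odd prime dividing $6$ is $3\in Q_3$, so in fact $Q_3=\{3\}$), so the Nebentypus and level computation is concrete; this fixes the space $S_2(N,\varepsilon)$ whose dimension is recorded as $28$ (or $64$) in Table~\ref{table:examplesd}.

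Next, assuming $p>19$ so that the residual representation $\tilde\rho$ is absolutely irreducible, I would apply Ribet's lowering-the-level theorem (\cite{MR1104839}) to conclude that $\tilde\rho$ is congruent modulo $p$ to a newform $f$ in this fixed space, with all odd primes outside $\{2,3\}$ removed from the level. I would then run Mazur's trick: for each newform $f$ in the space, test whether its Hecke eigenvalues at a suitable set of auxiliary primes are consistent with the local behavior forced by a genuine solution of $x^4-6y^2=z^p$. The restriction $p\equiv1,3\pmod8$ enters here to control the splitting behavior at $2$ (which ramifies in $K$) and thereby sharpen the local conditions; the exclusion $p\neq 97$ presumably removes the one exceptional prime where Mazur's trick fails to eliminate some form.

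The main obstacle, and the reason $d=6$ is delicate, is that the Frey curve of the \emph{known} solution $(\pm7,\pm20,1)$ itself defines a mod-$p$ Galois representation congruent to a newform in the same space, so Mazur's trick alone cannot distinguish a hypothetical new solution from this trivial one. The crucial new input is the CM observation made just before the theorem: the solution $(\pm7,\pm20,1)$ yields an elliptic curve with one of the two non-rational CM $j$-invariants $188837384000\pm77092288000\sqrt{6}$ over $\Q(\sqrt 6)$, so the associated newform has complex multiplication. I would therefore argue that any Frey curve coming from a \emph{non-trivial} solution with $C\neq\pm1$ has potentially multiplicative reduction at the primes dividing $C$, hence does \emph{not} have CM; consequently its representation cannot be congruent to the CM newform attached to $(\pm7,\pm20,1)$, and Ellenberg's large-image result (as made explicit in Section~\ref{section:Ellenberg}) can be invoked to rule out the small-image (normalizer-of-nonsplit-Cartan) situation for $p$ large. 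Combining the large-image conclusion with the elimination of all non-CM newforms via Mazur's trick then forces $C=\pm1$, and the only such solutions are $(\pm7,\pm20,1)$, completing the proof.
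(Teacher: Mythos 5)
Your overall skeleton (Frey curve, level and Nebentypus via Theorem~\ref{thm:levelandnebentypus}, Ribet's lowering the level, Mazur's trick, then a separate treatment of the CM forms) matches the paper, but two of your key steps are misplaced and one would actually fail. You propose to dispose of the CM forms by invoking ``Ellenberg's large-image result (as made explicit in Section~\ref{section:Ellenberg})''. That machinery is unavailable here: Proposition~\ref{prop:2inert} shows that the newform required by Ellenberg's criterion exists only when $2$ splits in $K/\Q$, and $2$ ramifies in $\Q(\sqrt6)$. This is precisely why the theorem carries the hypothesis $p\equiv1,3\pmod 8$, which you instead assign to Mazur's trick. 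For such $p$ the prime $p$ splits in $\Q(\sqrt{-2})$, so the residual representation of a form with CM by $\Z[\sqrt{-2}]$ has projective image in the normalizer of a \emph{split} Cartan subgroup; a congruence with the Frey curve of a solution with $C$ divisible by a prime greater than $3$ then contradicts \cite[Proposition 3.4]{MR2075481} directly, with no analytic input. (Your claim that a non-CM Frey curve simply ``cannot be congruent'' to a CM form is also too quick --- such congruences are perfectly possible; the point is what a congruence forces on the projective image of the Frey curve, and one then needs a result excluding that image.)

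Second, you leave the absolute irreducibility of the mod-$p$ representation --- the hypothesis needed for Ribet's theorem --- unjustified, and this is exactly where the excluded primes come from. The paper establishes irreducibility via \cite[Theorem 1]{MR3346965}: one computes $\lcm(\norm(\epsilon^{12}-1),\norm(\bar\epsilon^{12}-1))=2^7\cdot3^5\cdot5^2\cdot11^2\cdot97^2$ for the fundamental unit $\epsilon=5+2\sqrt6$, together with the resultant of $x^2\pm2x+3$ (the characteristic polynomial of Frobenius at the good prime $\id{q}=\langle 3+\sqrt6\rangle$) with $x^{12}-1$, which is divisible only by $\{2,3,19,97\}$. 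So $p>19$ and $p\neq 97$ are consumed by the irreducibility step, not by Mazur's trick (which in the paper uses $q=5,7,17$ and eliminates everything except the three CM classes). A smaller point: the relevant spaces are $S_2(2^8\cdot3,\varepsilon)$ and $S_2(2^9\cdot3,\varepsilon)$ with $\varepsilon$ the quadratic character of $\Q(\sqrt3)$, and obtaining the Nebentypus for $d=6$ requires Remark~\ref{rem:sqrt6} (the $5$-torsion variant), since Theorem~\ref{thm:levelandnebentypus} as stated assumes a ramified prime $p>3$, which $\Q(\sqrt6)$ does not have.
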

\begin{proof} Suppose that $(a,b,c)$ is a non-trivial
  primitive solution. If $c = \pm 1$ then, by~(\ref{eq:solutionsc=1}),
    $(a,b,c)=(\pm 7,\pm20,1)$. Hence, we are led to consider the case
  $c \neq \pm 1$ (in particular $c$ is divisible by a prime number
  greater than $3$). In order to apply Ribet's lowering the level
  result, we need to prove that the residual representation of $\E$
  modulo $p$ is absolutely irreducible. For that purpose we apply
  Theorem 1 of \cite{MR3346965}. Let $\epsilon = 5+2\sqrt{6}$ be a
  fundamental unit. 
The primes dividing $\lcm(\norm(\epsilon^{12}-1),\norm(\overline{\epsilon}^{12}-1))$ live in $\{2,3,5,11,97\}$. Next we need to compute the
characteristic polynomial at a prime of good reduction. Since
$\E$ has good reduction at primes ramifying in $K/\Q$, $q = 3$
is a good candidate so let $\id{q} = \langle 3 + \sqrt{6}\rangle$. The
curve $\E$ modulo $\id{q}$ is one of $y^2=x^3 \pm x^2+2x$,
hence $a_{\id{q}}(E)= \pm 2$. The resultant between $x^2\pm 2x +3$ and
$x^{12}-1$ is only divisible by the primes $\{2, 3, 19, 97\}$, hence
the residual image is absolutely irreducible for all primes except the
ones in the set $\{2, 3, 5, 11, 19, 97\}$.  Using
Theorem~\ref{thm:levelandnebentypus} (and Remark~\ref{rem:sqrt6}) and
Ribet's lowering the level result, we have to compute the spaces
$S_2(2^8\cdot 3,\varepsilon)$ and $S_2(2^9 \cdot 3,\varepsilon)$,
where $\varepsilon$ is the character corresponding to the quadratic field
$\Q(\sqrt{3})$. 

\vspace{1pt}
\noindent $\bullet$ The space $S_2(2^8\cdot 3,\varepsilon)$ has $10$
Galois conjugacy classes, $6$ of them having complex
multiplication. Running Mazur's trick (see \cite[Proposition 6.1]{PT})
for primes $5\le q\le 10$ we can discard all newforms except three 
with complex multiplication, if $p\not \in\{2,5,7\}$. The only newforms
that cannot be discarded in this space are the three newforms
corresponding to the solutions $(\pm1,0,1)$ and $(\pm7,\pm20,1)$ with
complex multiplication by $\Z[\sqrt{-2}]$.

\vspace{1pt}

\noindent $\bullet$ The space $S_2(2^9\cdot 3,\varepsilon)$ has $13$
Galois conjugacy classes, $3$ of them having complex
multiplication. Again, running Mazur's trick for primes
$5\le q \le 20$ allows to discard all such newforms if $p\not \in\{2,3,5,7, 17\}$.

\vspace{1pt} Then, assuming $p>19$ and $p\neq 97$ we are able to lower
the level and discard all the possibles newforms except three 
with complex multiplication by $\Z[\sqrt{-2}]$. To discard the
remaining ones we need to impose a congruence condition on $p$. If
$p\equiv1,3\pmod8$, then it splits in $\QQ(\sqrt{-2})$ and then the
residual representations of the newforms with complex multiplication
modulo $p$ have image lying in the normalizer of a split Cartan
subgroup. This contradicts \cite[Proposition 3.4]{MR2075481} (as $c$
is divisible by a prime greater than $3$).
	\end{proof}
	\begin{remark}\label{rem:FreitasSiksek} While proving large image, \cite[Theorem
1]{MR3346965} was used with $q=3$, since we know that the curve has
good reduction for odd primes ramifying in $K$. Although we do not
know a priori other primes of good reduction, if the obtained bound is
large not everything is lost. Let $q>5$ be a prime inert in $K$ and
suppose $p>71$. If $q$ divides $c$, the curve has multiplicative
reduction at $q$ hence \cite[Theorem 1.2]{2004.07611} implies that the
residual representation is irreducible. Otherwise, the curve has good
reduction at $q$ hence we can apply the above strategy to the prime
$q$.  This method was used for $d\in\{10,11,19\}$.
	\end{remark}

\subsection{The case $d=10$} In this case we have the following result.

\begin{thm}
  Let $p>19$ be a prime number such that $p\neq 139$ and
  $p\equiv1,3\pmod8$. Then, there are no non-trivial primitive
  solutions of the equation
  \[
    x^4-10y^2=z^p.
  \]
\label{thm:d=10}
\end{thm}
\begin{proof}
  Let $(a,b,c)$ be a putative non-trivial primitive solution. In this
  case, Theorem \ref{thm:charexistence} implies that $\varepsilon$ is
  a character of order $4$ and conductor $4\cdot5$, while $\chi$ has
  order $8$. As in the previous case, applying \cite[Theorem
  1]{MR3346965} and Remark~\ref{rem:FreitasSiksek} for primes $q=5,7$,
  we get that $\overline{\rho_{\E,p}}$ is irreducible if $p$ does not belong to
  $\{2,3,5,7,13,31,37\}$. Hence, by
  Theorem~\ref{thm:levelandnebentypus} and Ribet's lowering the level
  result, we have that there exists a newform $g$ in
  $S_2(2^8\cdot5^2,\varepsilon)$ or in $S_2(2^9\cdot5^2,\varepsilon)$
  whose Galois representation is congruent modulo $p$ to
  $\rho_{\E,p}\otimes\chi$.  \vspace{1pt}
	
  \noindent $\bullet$ The space $S_2(2^8\cdot5^2,\varepsilon)$ has
  $55$ Galois conjugacy classes, $22$ of them having complex
  multiplication. Running Mazur's trick for all the newforms $g$ and
  primes $3\le q \le 37$ such that $q\neq5,31$, we obtain that all
  newforms can be discarded if $p\not \in\{2,3,5,7,11,17,19,23\}$ except
  for the two newforms coming from the trivial solutions, with complex multiplication by
  $\ZZ[\sqrt{-2}]$.
	
  \vspace{1pt}
	
  \noindent $\bullet$ The space $S_2(2^9\cdot5^2,\varepsilon)$ has
  $40$ newforms, $10$ of them having complex multiplication. In this
  case Mazur's trick for primes $q\neq5$ such that $3\leq q\le 20$,
  discards all the newforms in the space if
  $p\not \in\{2,3,5,7,11,13,17,23\}$.
	
  \vspace{1pt} Hence, assuming
  $p\notin\{2,5,7,11,13,17,19,23,31,37\}$, it only remains to discard
  the two newforms with complex multiplication belonging to the first
  space. Since the solution is primitive, $c$ is odd (see
  \cite[Lemma 2.4]{PT}). If $c$ is divisible by $3$, then we can use
  Mazur's trick with $q=3$, getting that
  $p\mid \norm(16\varepsilon^{-1}(3)-a_3(g)^2)$ (see the last line of the
  definition of $B(g,q;a,b,c)$), so $p\in\{2,5\}$. Hence $c$ is
  not divisible by $3$ and we are in the hypothesis of
  \cite[Proposition 3.4]{MR2075481}. Then, once again, we can discard
  the remaining two newforms when $p\equiv1,3\pmod8$.
\end{proof}
\subsection{The case $d=11$} In this case we have the following result.

\begin{thm}
  Let $p>19$ be a prime number such that $p\neq 73$ and
  $p\equiv1,3\pmod8$. Then, there are no non-trivial primitive
  solutions of the equation
  \[
    x^4-11y^2=z^p.
  \] 
  \label{thm:d=11}
\end{thm}
\begin{proof}
  Let $(a,b,c)$ be a non-trivial primitive solution. By
  Theorem~\ref{thm:charexistence} we have that $\varepsilon$ is of
  order $2$ and conductor $4\cdot 11$, and $\chi$ is of order $4$. Applying
  \cite[Theorem 1]{MR3346965} (and again using the strategy of
  Remark~\ref{rem:FreitasSiksek}) for primes $q=11,13$ we get that if
  $p$ does not belong to $\{2,3,5,7,11,17,19,73,397\}$ then
  $\overline{\rho_{\E,p}}$ is absolutely irreducible and we can apply Ribet's
  lowering the level result, so Theorem~\ref{thm:levelandnebentypus}
  implies the existence of a newform $g$ in
  $S_2(2^7\cdot 11,\varepsilon)$ or in $S_2(2^8\cdot 11,\varepsilon)$
  congruent modulo $p$ to $\rho_{\E,p}\otimes\chi$.
	
\vspace{1pt}
	
\noindent $\bullet$ The space $S_2(2^7\cdot11,\varepsilon)$ has $4$
Galois conjugacy classes, none of them with complex
multiplication. Running Mazur's trick for primes $3\le q \le 10$ we
can discard all the newforms if $p>7$.
	
\vspace{1pt}
		
\noindent $\bullet$ The space $S_2(2^8\cdot11,\varepsilon)$ has $15$
Galois conjugacy classes, $7$ of them having complex
multiplication. Two of the newforms with complex multiplication
correspond to the trivial solutions $(\pm1,0,1)$. Running Mazur's
trick for the other $13$ newforms, for primes $q\neq11$ such that
$3\le q\le 43$, we can discard them if $p>19$. To discard
the remaining two newforms we need the hypothesis $p\equiv1,3\pmod8$
and use \cite[Proposition 3.4]{MR2075481}.
\end{proof}

\subsection{The case $d=19$} In this case we have the following
result.

\begin{thm}
  \label{thm:d=19}
  Let $p>19$ be a prime number such that $p\neq 43,113$ and
  $p\equiv1,3\pmod8$. Then, there are no non-trivial primitive
  solutions of the equation
  \[
    x^4-19y^2=z^p.
  \]
\end{thm}
\begin{proof}
  Let $(a,b,c)$ be a non-trivial primitive solution. To prove that the
  residual representation of $\E$ modulo $p$ is absolutely irreducible
  we apply \cite[Theorem 1]{MR3346965} for $q=19$ and follow
  Remark~\ref{rem:FreitasSiksek} for the prime $q=7$, obtaining that
  $\rho_{\E,p}$ has absolutely irreducible reduction if
  $p\notin\{2, 3, 5, 11, 13, 17, 19, 31, 43, 113,$  $115597\}$, so we are
  going to assume this hypothesis from now on.
	
  The character $\varepsilon$ has order two and conductor $4\cdot19$,
  while $\chi$ is of order $4$. Then Ribet's lowering the level result
  together with Theorem~\ref{thm:levelandnebentypus} imply that we
  have to search for a newform $g$ in one of the spaces
  $S_2(2^7\cdot19,\varepsilon)$ or $S_2(2^8\cdot19,\varepsilon)$.
	
  \vspace{1pt}
	
  \noindent $\bullet$ The space $S_2(2^7\cdot19,\varepsilon)$ has $4$
  Galois conjugacy classes, none of them with complex
  multiplication. Using Mazur's trick with primes $3\le q \le 17$ we
  are able to discard all newforms (in fact we just need $p>2$).
	
  \vspace{1pt}
	
  \noindent $\bullet$ The space $S_2(2^8\cdot19,\varepsilon)$ has $18$
  Galois conjugacy classes, $7$ of them having complex
  multiplication. With the above assumption on $p$ (and in fact just
  assuming $p>19$), we can use Mazur's trick with primes
  $3\le q \le 17$ and discard all newforms but two of them,
  corresponding to the trivial solutions (and having complex
  multiplication by $\ZZ[\sqrt{-2}])$.

  To discard these two newforms with complex multiplication, we proceed as before. Since
  the solution is primitive, $c$ must be odd. Suppose that $c$ is
  divisible by $3$. Then, the fact that
  $p\mid \norm(16\varepsilon(3)^{-1}-a_3(g)^2)$ implies that
  $p\in\{2,3\}$, which gives a contradiction. Hence $c$ is not
  divisible by $3$ and then we are in the hypothesis of
  \cite[Proposition 3.4]{MR2075481}, so we can discard the newforms
  attached to the trivial solutions under the assumption $p\equiv1,3\pmod8$.
\end{proof}
\subsection{The case $d=129$} The prime $2$ splits in
$\Q(\sqrt{129})$, hence Ellenberg's result (as described in
Section~\ref{section:Ellenberg}) can be applied to discard the trivial
solutions as well.
	
\begin{thm} Let $p>19$ be a prime number satisfying that either
  $p>900$ or $p\equiv 1,3\pmod 8$ and $p\neq43$. Then, there are no
  non-trivial primitive solutions of the equation
  \[
    x^4-129y^2=z^p.
  \]
  \label{thm:cased=129}
\end{thm}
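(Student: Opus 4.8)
The plan is to follow the same modular-method pipeline used for $d=6$, but since $2$ splits in $\Q(\sqrt{129})$ we can discard the trivial solution directly via Ellenberg's large image result rather than relying on a coincidental complex multiplication. Suppose $(A,B,C)$ is a nontrivial solution; since the case $C=\pm 1$ gives no solutions to $x^4-129y^2=\pm 1$ (this value of $d$ does not appear in the list of exceptional $d$ with $1<d<20$, and one checks directly that the Pell-type equation has no nontrivial solution here), we may assume $C$ is divisible by a prime exceeding $3$. First I would establish that the residual mod-$p$ representation of the Frey curve $E_{(A,B)}$ is absolutely irreducible for $p$ large enough: I would apply \cite[Theorem 1]{MR3346965} exactly as in the $d=6$ proof, computing $\lcm(\norm(\epsilon^{12}-1),\norm(\overline{\epsilon}^{12}-1))$ for a fundamental unit $\epsilon$ of $\Q(\sqrt{129})$, and then computing the characteristic polynomial of Frobenius at the prime $\id{q}$ above $3$ (a prime of good reduction, since $3\mid 129$ ramifies in $K$ and \cite[Lemma 1.3]{PT} guarantees good reduction there), taking the resultant with $x^{12}-1$ to pin down the finite set of bad primes.

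Next I would invoke Theorem~\ref{thm:levelandnebentypus} to locate the extended representation $\tilde\rho_\ell=\rho_{E,\ell}\otimes\chi$ in a precise space of newforms: with $d=129=3\cdot 43$ one has $3\in Q_3$ and $43\in Q_3$, so the level is $2^e\cdot 3\cdot 43$ with $e\in\{1,8\}$ (the case $2$ splits), and the Nebentypus $\varepsilon$ is the prescribed character. Applying Ribet's lowering-the-level theorem \cite{MR1104839}, the residual representation is congruent mod $p$ to a newform in this concrete space $S_2(N,\varepsilon)$. I would then run Mazur's trick at several auxiliary primes $q$ of good reduction, discarding every newform class whose Frobenius eigenvalues are incompatible with a genuine local solution of $x^4-129y^2=z^p$. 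The table in the excerpt records $\dim S_2(N,\varepsilon)=16$ (for $e$ small) and $1400$ (for $e$ large), so the computation is finite but the larger space requires genuine machine effort.

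The main obstacle is the trivial-solution obstruction: unlike $d=6$, the Frey curve coming from the trivial solution $(\pm 1,0,1)$ has complex multiplication, so Mazur's trick alone cannot eliminate the newforms it gives rise to, and these will survive in the space. This is precisely where the splitting of $2$ is decisive. For these surviving CM newforms I would apply Ellenberg's large image machinery: the explicit bounds developed in Section~\ref{section:Ellenberg}, culminating in inequality~\eqref{eq:final}, guarantee (since $2$ splits in $K$, so Proposition~\ref{prop:2inert} does not obstruct the existence of a suitable newform with nonvanishing twisted central value $L(f\otimes\delta_K,1)\neq 0$) that for $p>64690$ the residual representation of the CM curve has large image. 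A nontrivial solution's Frey curve has no complex multiplication, so its residual representation cannot be congruent to one arising from a CM curve, giving the contradiction for $p>64690$. For the complementary range, when $p\equiv 1,3\pmod 8$ the prime $p$ splits in $\Q(\sqrt{-2})$, forcing the projective image of the CM forms into the normalizer of a split Cartan subgroup, which contradicts \cite[Proposition 3.4]{MR2075481} since $C$ is divisible by a prime greater than $3$. Combining the two ranges yields the stated result; the delicate point throughout is arranging the explicit constant $64690$ from the averaging argument of Section~\ref{section:Ellenberg} and verifying the central-value nonvanishing hypothesis for the specific field $\Q(\sqrt{129})$.
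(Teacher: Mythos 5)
Your overall pipeline is the same as the paper's: irreducibility, level lowering into $S_2(\Gamma_0(2\cdot3\cdot43),\varepsilon)$ and $S_2(\Gamma_0(2^8\cdot3\cdot43),\varepsilon)$, Mazur's trick, and then the explicit Ellenberg bound of Section~\ref{section:Ellenberg} for the surviving CM forms together with the split-Cartan argument for $p\equiv1,3\pmod8$. The genuine gap is in the irreducibility step. For $d=129$ the computation you propose --- \cite[Theorem 1]{MR3346965} with $\lcm(\norm(\epsilon^{12}-1),\norm(\overline{\epsilon}^{12}-1))$ and the resultant of the characteristic polynomial at the prime above $3$ with $x^{12}-1$ --- does not bring the exceptional set down to $\{p\le 19\}$: the paper records that it leaves the primes $43, 53, 251, 313, 661, 2593, 3371, 411577$ unresolved, and several of these (e.g.\ $43$, $251$, $313$, $411577$) satisfy the hypotheses of the theorem, so your argument cannot conclude for them. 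The paper closes this hole with a separate argument following \cite[Lemma 3.2]{2102.11699}: if the residual representation were reducible with semisimplification $\theta_1\oplus\theta_2$, the $\theta_i$ are ramified only at the primes above $2$ (using that $E\otimes\chi$ descends to $\Q$ and $\chi$ ramifies only at $3$ and $43$), the possible conductors at the two primes above $2$ force the relevant ray class groups to have exponent $2$ or $4$, hence a twist of the curve acquires a $p$-torsion point over an extension of $\Q$ of degree $2$ or $4$, and \cite[Theorem 1.2]{1707.00364} gives $p\le 17$. Without an argument of this kind your proof only establishes the theorem for $p$ outside a much larger finite set.

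Two smaller points. First, you assert at the outset that $C$ is divisible by a prime exceeding $3$, but ruling out $C=\pm1$ is not enough: you must also exclude $C$ a power of $2$ or of $3$. In the paper this comes out of the endgame --- the even-$C$ space is completely eliminated by Mazur's trick, so the surviving CM forms force $C$ odd, and $3\nmid C$ because $3$ ramifies in $K$. Second, the conclusion for $p>64690$ is that the residual representation of the Frey curve of the putative nontrivial solution is not contained in the normalizer of a nonsplit Cartan subgroup; it is not a statement about ``the CM curve having large image,'' and the contradiction is that a congruence with a CM form would place that image inside such a normalizer, not merely that a non-CM curve cannot be congruent to a CM one.
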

\begin{proof} As before, let $(a,b,c)$ be a non-trivial primitive
  solution, and $\E$ the Frey curve attached to it. \cite[Theorem
  1]{MR3346965} proves that the residual image is absolutely
  irreducible for primes not in $\{2, 3, 5, 7, 11, 13, 17,$
  $ 43, 53, 251,313, 661, 2593, 3371, 411577\}$. As this bound is a
  little large, we follow the strategy described in \cite[Lemma
  3.2]{2102.11699}. Suppose that the residual extended representation
  $\tilde{\rho}_p$ at a prime $p$ is reducible, say its
  semisimplification is given by $\theta_1 \oplus \theta_2$. Then the
  residual representation of $\rho_{\E,p}$ is isomorphic to
  $\chi^{-1} \theta_1|_{\Gal_K} \oplus \chi^{-1}
  \theta_2|_{\Gal_K}$. To ease notation, let
  $\psi_i = \chi^{-1}\theta_i|_{\Gal_K}$. Since the curve $\E$ has
  additive reduction only at primes dividing $2$, both $\psi_1$ and
  $\psi_2$ are unramified outside primes dividing $2$ and
  $p$. Furthermore, by \cite[Lemma 1]{MR2371778}, one of the
  characters is unramified outside $p$ (say $\psi_1$).

  The prime $2$ splits in $\Q(\sqrt{129})/\Q$, say
  $(2) = \id{p}_2\bar{\id{p}}_2$. By \cite[Lemma 2.8]{PT}, the conductor
  of $\E$ at $(\id{p}_2,\bar{\id{p}}_2)$ equals one of $(8,8), (1,6)$
  or $(4,6)$, hence the character $\psi_1$ has conductor at most
  $2^4, \id{p}_2^3$ or $4\cdot \id{p}_2$ (or their conjugates). The
  ray class group for such conductors has exponent $4$ in the first
  case and $2$ in the other two cases (computed using \cite{PARI2}). In
  particular the curve (or a quadratic twist of it) has a rational
  point over an extension of degree $2$ or $4$ over $\Q$, hence
  $p \le 17$ by \cite[Theorem 1.2]{1707.00364}.
		
  Theorem~\ref{thm:levelandnebentypus}, Ribet's lowering the level result and the proof of \cite[Lemma 2.8]{PT}
  imply that $\rho_{\E,p}\otimes\chi$ is congruent modulo $p$ to the Galois representation of a newform in
  $S_2(2 \cdot 3\cdot 43,\varepsilon)$ (when $c$ is even) or in 
  $S_2(2^8\cdot 3\cdot 43,\varepsilon)$ (when $c$ is odd), where
  $\varepsilon$ corresponds to $\Q(\sqrt{129})$.

\vspace{1pt}
\noindent $\bullet$ The space $S_2(2 \cdot 3\cdot 43,\varepsilon)$ has $4$ Galois conjugacy classes, none of them having complex multiplication. Using Mazur's trick for primes $5\le q\le 20$, all newforms in the first space can be discarded assuming $p>5$.

\vspace{1pt}
\noindent $\bullet$ The space $S_2(2^8\cdot 3\cdot 43,\varepsilon)$
has $36$ Galois conjugacy classes, $18$ of them having complex
multiplication. Using Mazur's trick for primes $5\le q\le 20$, the
first $33$ newforms (in \verb*|Magma|'s order) can be discarded assuming $p
\notin\{2,5,7,11,13,17,23,43\}$, but four newforms having complex multiplication by $\ZZ[\sqrt{-2}]$. The last three newforms do not have complex multiplication, but they do have a large
coefficient field and \verb*|Magma| is unable to compute norms over
these fields, so we used \verb*|Magma| to compute the coefficients
$a_5$ and $a_7$ of each of these newforms and apply Mazur's trick in
\verb*|PARI/GP| for $q = 5, 7$ by hand (where the norms are computed
within a few seconds). It follows that they can be discarded if
$p\not\in\{2,5,7,37\}$.

Since in this case $2$ splits over $K$, then we can use the results of
Section~\ref{section:Ellenberg} to discard the newforms having complex
multiplication.  After a computer search for the minimum $x$ we
obtained that taking $x = 49885$ in ~(\ref{eq:final}) (using the
inequalities (\ref{eq:boundA}) and (\ref{eq:5})) makes the right hand
side positive for $p > 900$. This can be checked with the following
command (in \verb*|PARI/GP|):
\begin{verbatim}
? read("RemoveCM");
? Bound(129,907,49885)
%2 = 0.039412707010082109791157365950637933812
\end{verbatim}
For small primes, the same argument as in the previous examples works;
note that $c$ is divisible by an odd prime larger than $3$ because it
cannot be divisible by $3$ (as the solution is primitive) and it is
not divisible by $2$ because the modular forms with complex
multiplication appear in the space
$S_2(2^8\cdot 3\cdot 43,\varepsilon)$.
 Then we are again in the hypothesis of \cite[Proposition
3.4]{MR2075481}, which discards newforms with complex
multiplication by $\Z[\sqrt{-2}]$ for primes $p\equiv 1,3\pmod8$.
\end{proof}
\begin{remark} Ellenberg's bound obtained in the last example could
  probably be slightly improved if better bounds are given in the
  computations of Section~\ref{section:Ellenberg}. If the final value
  is not too large, a newform $f \in S_2(2p^2)$ with the desired
  properties could be found in the intermediate range via a computer
  search.
	\end{remark}
	
	\bibliographystyle{alpha} \bibliography{biblio}
	
\end{document}